\DeclareMathAlphabet{\mathcal}{OMS}{cmsy}{m}{n}
\newcolumntype{L}{>{$}l<{$}} 
\tikzset{
	symbol/.style={
		draw=none,
		every to/.append style={
			edge node={node [sloped, allow upside down, auto=false]{$#1$}}}
	}
}
\def \ord {\textup{ord}}
\def \mbc {\mathbb{C}}
\def \mbp {\mathbb{P}}
\def \mbp {\mathbb{P}}
\def \mbz {\mathbb{Z}}
\def \mbn {\mathbb{N}}
\def \ns  {\textup{NS}}
\def \ch {\textup{CH}}
\def \Bl {\textup{Bl}}
\def \ra {\rightarrow}
\def \id  {\textup{id}}
\def  \Hom {\textup{Hom}}
\def \lhom {\textup{hom}}
\newtheorem{thm}{Theorem}[section]
\newtheorem{lem}[thm]{Lemma}
\newtheorem{prop}[thm]{Proposition}
\newtheorem{cor}[thm]{Corollary}
\newtheorem*{conj-no}{Conjecture}
\theoremstyle{definition}
\newtheorem{defn}[thm]{Definition}
\newtheorem{rmk}[thm]{Remark}
\newtheorem{exmp}[thm]{Example}
\numberwithin{thm}{section}
\title{Elliptic constant cycle curves on Kummer surfaces}
\author{Jiexiang Huang }
\begin{document}
	\begin{abstract}
	The order of a constant cycle curve $C \subset X$ on a K3 surface, defined by Huybrechts, is a positive integer that measures the obstruction to decomposing the diagonal class $\Delta_C$ in the Chow group $\ch^2(X \times C)$. 
	In this paper, we compute the order of  elliptic constant cycle curves that naturally arise on Kummer surfaces, by passing to the transcendental intermediate Jacobian $J_{\mathrm{tr}}^3(X \times C)$. As a consequence, every $n \in \mbn$ can be realized as the order of a constant cycle curve on a K3 surface.
	\end{abstract}
	\thanks{The author is supported by the ERC Synergy Grant HyperK (ID 854361)}
	\thanks{Address: Mathematical Institute, Universität Bonn, 53113 Bonn, Germany}
	\thanks{E-mail: jxhuang@math.uni-bonn.de}
	
	\maketitle

\section{Introduction} \label{intro}
 Constant cycle curves on surfaces were systematically studied by Huybrechts in \cite{Huybrechts2014}. These are curves whose every point represents the same class in the Chow group of the surface. 
 An integral curve $C \subset X$ on a K3 surface is a constant cycle curve if and only if there exists a nonzero integer $N$ such that $N[\Delta_C] \in \ch^2(X \times C)$ is decomposable, in the sense that it can be represented as a sum of product cycles, and the minimal positive integer $N$ with this property is defined by Huybrechts as the \textit{order} $\ord(C)$  of the constant cycle curve $C$. 

The order $\ord(C)$  of a constant cycle curve $C$ measures how far  $C$ is from being rational.  Rational curves are constant cycle curves of order one, but note that a constant cycle curve of order one is not necessarily rational. For a non-rational constant cycle curve $C$,  one typically  obtains an upper bound $\ord(C)\mid n$ by construction, but determining  $\ord(C)$ may be difficult,  see \Cref{sec_review} for a list of such examples. As for  rational curves, we have a finiteness result for constant cycle curves of fixed order $n$ in a linear system $|L|$ on $X$ (\cite[Prop.~5.1]{Huybrechts2014}). However,  there is still a lack of explicit examples of constant cycle curves of order larger than one. It is not even clear whether every positive integer can occur as the order of some constant cycle curve. The relationship between the order $\ord(C)$ and the genus $g(C)$ of a constant cycle curve also remains mysterious.

In this paper, we compute the order of a series of elliptic constant cycle curves on Kummer surfaces.
Recall that
the Kummer surface $X = \textup{Kum}(E_1 \times E_2)$, associated with the product of two elliptic curves, admits a natural elliptic fibration $p \colon X \ra E_1 \times E_2 / \pm \ra E_1 / \pm \simeq \mbp^1$. It has exactly four singular fibres, all of type $I_0^{\ast}$, corresponding to the two-torsion points of $E_1$. 
For any torsion point $t \in E_1$ of order $n$, the  fibre $E_t \coloneqq p^{-1}(\bar{t}) \simeq E_2$ is known to be a constant cycle curve with $\ord(E_t) \mid n$  by  \cite[Lem.~6.5]{Huybrechts2014}.  Note that for $n \leqslant 2$, the fibre $E_t$ is  singular and in particular rational, hence $\ord(E_t) = 1$. 

When $n > 2$, the constant cycle curve $E_t$ is smooth and elliptic. Under mild assumptions, we can determine $\ord(E_t)$. Our main result is as follows (see \Cref{order_kappa'_case1} and \Cref{ord_4torsion}):
\begin{thm} \label{ccc_Kummer_ord} 
	Let  $E_1, E_2$ be two elliptic curves and $t \in E_1$ be a torsion point of order $n > 2$. Then the constant cycle curve $E_t \subset X = \textup{Kum}(E_1 \times E_2)$ has
	\begin{equation*}
		\ord(E_t) = d(n) \coloneqq
		\begin{cases}
			n / 2,\quad & 2 \mid n \\
			n,\quad & 2 \nmid n
		\end{cases}
	\end{equation*}
except possibly when $E_1,E_2$ are non-isomorphic, isogeneous with CM and $4 \mid n$.
\end{thm}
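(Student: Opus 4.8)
The plan is to pin down $\ord(E_t)$ up to a factor of two by computing the Abel--Jacobi invariant of the diagonal in the transcendental intermediate Jacobian $J_{\mathrm{tr}}^3(X\times E_t)$, and then to remove the remaining ambiguity using the divisibility $\ord(E_t)\mid n$ of \cite[Lem.~6.5]{Huybrechts2014} together with one extra construction in the even case. For the lower bound, note that since $H^1(X)=H^3(X)=0$ the Künneth formula gives $H^3(X\times E_t)=H^2(X)\otimes H^1(E_t)$, and the transcendental summand $T(X)\otimes H^1(E_t)$ cuts out a sub-Hodge structure whose intermediate Jacobian is $J_{\mathrm{tr}}^3(X\times E_t)$. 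Subtracting from $\Delta_{E_t}$ a suitable combination of the evident product cycles $\{x\}\times E_t$, $X\times\{y\}$ and $D\times\{y\}$ gives a homologically trivial modified diagonal $\widetilde\Delta_{E_t}$ with the same transcendental Abel--Jacobi invariant, and because $\ns(X)$ is orthogonal to $T(X)$ the Abel--Jacobi invariant of every decomposable cycle vanishes in $J_{\mathrm{tr}}^3(X\times E_t)$. Hence $\ord(E_t)\cdot\Delta_{E_t}$ being decomposable forces $\ord(E_t)\cdot\mathrm{AJ}(\widetilde\Delta_{E_t})=0$, so the (necessarily torsion) order of $\mathrm{AJ}(\widetilde\Delta_{E_t})\in J_{\mathrm{tr}}^3(X\times E_t)$ divides $\ord(E_t)$.

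Next I would compute this class by descending to $A=E_1\times E_2$. Writing $\tilde A\ra A$ for the blow-up of the sixteen points of $A[2]$ and $q\colon\tilde A\ra X$ for the natural double cover, and identifying $E_t\cong E_2$, the hypothesis $t\notin E_1[2]$ ensures that the curve $\{t\}\times E_2\subset A$ misses the blown-up points, and a direct check shows that in the transcendental Künneth piece $H^1(E_1)\otimes H^1(E_2)\otimes H^1(E_2)$ of $H^3(\tilde A\times E_2)$ the pullback $(q\times\id)^*\widetilde\Delta_{E_t}$ is represented by $\{t\}\times\Delta_{E_2}+\{-t\}\times\Delta_{E_2}^{-}$, with $\Delta_{E_2}^{-}$ the graph of $-\id_{E_2}$. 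By multiplicativity of the Abel--Jacobi map its class there is
\begin{equation*}
 \mathrm{AJ}_{E_1}(t-0)\otimes\delta+\mathrm{AJ}_{E_1}(-t-0)\otimes(-\delta)=2\,\mathrm{AJ}_{E_1}(t-0)\otimes\delta,
\end{equation*}
where $\mathrm{AJ}_{E_1}(t-0)\in E_1$ is the $n$-torsion class of $t$ and $\delta\in H^1(E_2,\mbz)^{\otimes2}$ is the primitive class of the diagonal; hence this class has order $n/\gcd(n,2)=d(n)$. Finally, the correspondence $q$ realizes the classical isomorphism $T(X)\cong T(A)(2)$, so $q^*$ scales the intersection form by $2$ and identifies $T(X)_{\mbz}$ with $2\,T(A)_{\mbz}$; tracing this through the intermediate Jacobians, the induced map $q^*\colon J_{\mathrm{tr}}^3(X\times E_t)\ra J_{\mathrm{tr}}^3(\tilde A\times E_2)$ is surjective with kernel killed by $2$, so $\mathrm{AJ}(\widetilde\Delta_{E_t})$ has order $d(n)$ or $2\,d(n)$. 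Combining with $\ord(E_t)\mid n$ this yields $d(n)\mid\ord(E_t)\mid n$, which already gives $\ord(E_t)=n=d(n)$ when $n$ is odd (\Cref{order_kappa'_case1}).

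It remains, when $n=2m$ is even, to prove $\ord(E_t)\mid m$. If $m$ is odd, i.e. $n\equiv2\pmod4$, I would put $s\coloneqq(m+1)t$: then $s$ has order $m$ and $s-t=mt\in E_1[2]$, so translation of $A$ by $(mt,0)\in A[2]$ descends to an automorphism of $X$ carrying $E_t$ isomorphically onto $E_s$, whence $\ord(E_t)=\ord(E_s)\mid m$ by \cite[Lem.~6.5]{Huybrechts2014}. If $4\mid n$ this translation trick breaks down ($m$ is even and no such $s$ exists), and one must instead exhibit $m\cdot\widetilde\Delta_{E_t}$ directly as a decomposable cycle on $X\times E_t$; this is carried out in \Cref{ord_4torsion}, and the construction requires sufficient control of $\ns(A)$ and of $\Hom(E_1,E_2)$, which is available except precisely when $E_1$ and $E_2$ are non-isomorphic, isogenous and have complex multiplication --- hence the stated exception.

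The step I expect to be the main obstacle is the period bookkeeping in the middle paragraph: one must track the integral transcendental Hodge structure through the blow-up and the double cover carefully enough to see the order $d(n)$ rather than $n$ --- the factor $2$ in $T(X)\cong T(A)(2)$ coming from the sixteen $(-2)$-curves is exactly what produces the distinction, and one must keep control of the residual $2$-torsion ambiguity in $q^*$. The secondary difficulty, and the source of the CM exception, is the upper bound when $4\mid n$: without a translation automorphism one is forced into an explicit decomposition of $m\cdot\widetilde\Delta_{E_t}$ whose existence is sensitive to $\Hom(E_1,E_2)$.
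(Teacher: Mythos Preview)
Your overall architecture---descend to $A=E_1\times E_2$, compute the Abel--Jacobi image of a modified diagonal, and read off the order in the transcendental intermediate Jacobian---matches the paper's. But you have inverted which bound is hard, and this produces a genuine gap.

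The paper obtains the \emph{upper} bound $\ord(E_t)\mid d(n)$ uniformly and cheaply: one has $\pi^*[Z_t]=\phi^*[Z_t']$ with $[Z_t']=2([t]-[e_1])\times[\id]$, and $[Z_t']$ is literally a $d(n)$-torsion class in $\ch^2(A\times E_t)$ (\Cref{order_CycleInProduct}). Combined with the injectivity of $\pi_{\mathrm{tr}}^*$ (\Cref{trAJ_pullback}, not merely ``kernel killed by $2$'') and the CTSS injectivity of $\Phi^{\mathrm{tr}}$ on torsion, this yields $\ord(E_t)=\ord(\kappa_t')\mid d(n)$ (\Cref{order_kappa_same}) in all cases at once. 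Your translation trick for $n\equiv 2\pmod 4$ is a pleasant alternative there, but for $4\mid n$ you appeal to \Cref{ord_4torsion} for a decomposition of $m\cdot\widetilde\Delta_{E_t}$; that proposition constructs no such decomposition---it is a lower-bound statement---so your argument does not actually supply the upper bound when $4\mid n$.

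The more serious gap is in your lower bound. You compute $2\,\mathrm{AJ}_{E_1}(t-0)\otimes\delta$ in the K\"unneth piece $H^1(E_1)\otimes H^1(E_2)\otimes H^1(E_t)$ and assert it has order $d(n)$ in $J_{\mathrm{tr}}^3(\tilde A\times E_2)$. But ``transcendental'' means modding out by $\ns(A)\otimes H^1(E_t)$, and when $E_1\sim E_2$ the subgroup $\Hom(E_1,E_2)\subset H^1(E_1)\otimes H^1(E_2)$ lies in $\ns(A)$; decomposable cycles of the form $[\phi]\times\beta$ land nontrivially in precisely this K\"unneth piece, so the order of your class in $J_{\mathrm{tr}}^3$ can drop below $d(n)$. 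Showing that it does \emph{not} drop is exactly the work of \S\ref{sec_ord_kprime}: the pushforward by $\id_{E_1}\times\Sigma$ in \Cref{order_kappa'_case1} and the explicit lattice computation modulo $2$ in \Cref{case2_lem}. And \S\ref{counterexample} shows the order genuinely does drop in the excluded CM case. So the CM exception arises on the lower-bound side, not from any obstruction to decomposing $m\cdot\widetilde\Delta_{E_t}$; your proposal locates the difficulty on the wrong side.
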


In the excluded case, i.e.\ when $E_1 \sim E_2$  are non-isomorphic with CM and $4 \mid n$, determining $\ord(E_t)$ is subtle: it can be $n/2$ or strictly smaller,  see \Cref{counterexample} for an explicit example.

The proof of \Cref{ccc_Kummer_ord} follows a general principle suggested in \cite[Sec.~5.2]{Huybrechts2014}: The order of a constant cycle curve $C \subset X$ can be determined in the transcendental  intermediate Jacobian $J_{\mathrm{tr}}^3(X \times C)$; see  \Cref{sec_strategy} for the explicit formulation. 
The key ingredient of this strategy is the injectivity of Abel--Jacobi maps on codimension-two torsion classes, established by Colliot-Th{\'e}l{\`e}ne, Sansuc and  Soul{\'e} \cite{CTSS83AJcodimtwo}.

As a direct corollary of  \Cref{ccc_Kummer_ord}, the above elliptic constant cycle curves  provide  examples of constant cycle curves of arbitrary order $n > 1$, namely
\begin{cor} \label{ccc_everyorder}
	Let $n > 1$ be an integer. Any elliptic curve $E$ can be embedded as a constant cycle curve of exactly order $n$ into any  Kummer surface $X = \textup{Kum}(E \times F)$, where $F$ can be taken as $E$ or any elliptic curve non-isogeneous to $E$. 
\end{cor}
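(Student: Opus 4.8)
The plan is to deduce this directly from \Cref{ccc_Kummer_ord}: it suffices to exhibit, for the given $n$, a torsion point whose associated fibre is a constant cycle curve isomorphic to $E$ of order exactly $n$. First note that the function $d$ in \Cref{ccc_Kummer_ord} attains every value $n > 1$; indeed $d(2n) = n$ since $2n$ is even and $2n > 2$, while for odd $n$ one may instead use $d(n) = n$. So I would fix once and for all an integer $m > 2$ with $d(m) = n$ (for definiteness, $m = 2n$).

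Next I would set up the Kummer surface. Let $F$ be either $E$ itself or any elliptic curve non-isogeneous to $E$, and write $X = \textup{Kum}(E \times F) = \textup{Kum}(F \times E)$. Using the latter presentation, $X$ carries the elliptic fibration $p \colon X \ra F/\pm \simeq \mbp^1$ whose general fibre is isomorphic to $E$. Since $F[m] \cong (\mbz/m\mbz)^2$, there is a point $t \in F$ of exact order $m$; as $m > 2$ the point $t$ is not $2$-torsion, so $\bar t$ avoids the four branch points and the fibre $E_t = p^{-1}(\bar t)$ is smooth. This realizes an embedding $E \simeq E_t \hookrightarrow X$, and by \cite[Lem.~6.5]{Huybrechts2014} the curve $E_t$ is a constant cycle curve with $\ord(E_t) \mid m$.

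It then remains to compute $\ord(E_t)$ via \Cref{ccc_Kummer_ord}, applied with $E_1 = F$, $E_2 = E$ and the torsion point $t$ of order $m$. This gives $\ord(E_t) = d(m) = n$, provided we are not in the exceptional case of that theorem, which requires $E_1$ and $E_2$ to be non-isomorphic and isogeneous. If $F = E$ then $E_1 \cong E_2$; if $F$ is non-isogeneous to $E$ then $E_1$ and $E_2$ are not isogeneous. In either case the exceptional case does not occur, so $\ord(E_t) = n$ and the corollary follows.

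There is no real difficulty in this argument once \Cref{ccc_Kummer_ord} is in hand; the only point that needs attention --- and the reason the statement allows $F$ to be $E$ or any curve non-isogeneous to $E$, but not an arbitrary isogenous companion --- is precisely to stay away from the exceptional case of \Cref{ccc_Kummer_ord}, where $\ord(E_t)$ could be a proper divisor of $d(m)$ and the conclusion would fail.
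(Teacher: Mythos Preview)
Your argument is correct and is precisely the fleshing-out the paper has in mind: the corollary is stated without proof as an immediate consequence of \Cref{ccc_Kummer_ord}, and your choice of $m=2n$ together with the swap $E_1=F$, $E_2=E$ (so that the fibre is isomorphic to $E$) and the observation that the hypotheses $F=E$ or $F\nsim E$ rule out the exceptional case is exactly the intended deduction.
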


Recall from \cite[Ex.~7.3]{Huybrechts2014} that there exist smooth elliptic curves that are  constant cycle curves on K3 surfaces of order one. Therefore, we conclude
\begin{cor}
	For any integer $n \geqslant 1$, there exists a K3 surface $X$  with a smooth  constant cycle curve $E \subset X$ of genus one and  order $n$. 
\end{cor}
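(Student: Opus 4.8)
The plan is to deduce the statement directly from the two results just quoted, treating $n=1$ and $n>1$ separately. For $n=1$ I would simply invoke \cite[Ex.~7.3]{Huybrechts2014}, recalled immediately above, which already exhibits a smooth elliptic curve on a K3 surface that is a constant cycle curve of order one.

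For $n>1$ I would apply \Cref{ccc_everyorder}. Fix an arbitrary elliptic curve $E$ and take $F$ to be $E$ itself (or, if one prefers, any elliptic curve non-isogeneous to $E$), so that the Kummer surface $X=\textup{Kum}(E\times F)$ stays outside the excluded case of \Cref{ccc_Kummer_ord} --- namely ``$E,F$ non-isomorphic, isogeneous with CM'' --- regardless of any divisibility condition on the torsion order. Now pick a torsion point of $F$ of order $N$ with $d(N)=n$ and $N>2$; the choice $N=2n$ always works, since $d(2n)=n$ and $2n\geqslant 4$. Then \Cref{ccc_everyorder} produces a copy of $E$ inside $X$, embedded as a constant cycle curve of order exactly $n$; being (isomorphic to) a smooth elliptic fibre --- no $I_0^{\ast}$-type degeneration occurs because $N>2$ --- this curve is smooth of genus one, and the proof is complete.

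I expect no real difficulty: the corollary is essentially a repackaging of \Cref{ccc_everyorder} together with \cite[Ex.~7.3]{Huybrechts2014}. The only two points to keep in mind --- that the curve obtained is smooth of genus one, and that one avoids the excluded CM case --- are both secured by choices already permitted in the statement of \Cref{ccc_everyorder} (torsion order $>2$, and $F$ free to be $E$ or non-isogeneous to $E$).
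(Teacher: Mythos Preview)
Your proposal is correct and matches the paper's approach exactly: the paper deduces the corollary by combining \Cref{ccc_everyorder} for $n>1$ with \cite[Ex.~7.3]{Huybrechts2014} for $n=1$. Your additional discussion of choosing a torsion point of order $N=2n$ and avoiding the excluded CM case is really an unpacking of how \Cref{ccc_everyorder} itself follows from \Cref{ccc_Kummer_ord}; once \Cref{ccc_everyorder} is granted, the embedded curve is already a smooth elliptic curve by construction, so this extra step is not needed, but it does no harm.
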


\noindent\textbf{Structure of this paper.} 
  We review the notion of order of constant cycle curves in  \Cref{sec_review}, and then outline a general strategy for determining  the order $\ord(C)$ of a constant cycle curve $C \subset X$ in the transcendental intermediate Jacobian $J_{\mathrm{tr}}^3(X \times C)$ in \Cref{sec_strategy}. 
  
  The proof of our main result \Cref{ccc_Kummer_ord} takes two steps. First, in \Cref{sec_order_Zt}, we  sharpen the upper bound  $\ord(E_t) \mid n$ to  $\ord(E_t) \mid d(n)$ by passing to the abelian threefold $E_1 \times E_2 \times E_t$.  Then we show in  \Cref{sec_ord_kprime} that this bound is generically optimal, i.e.\   $\ord(E_t) = d(n)$  under mild assumptions on $E_1$ and $E_2$. The case $E_1 \nsim E_2$ is handled via a study of product cycles in $\ch^2(E_1 \times E_2 \times E_t)$ (see \Cref{sec_decomp_triple_elliptic} and \Cref{sec_case1}), while in the more subtle case $E_1 \sim E_2$, we  apply the strategy in \Cref{sec_strategy} to
  determine  $\ord(E_t)$ in  $J_{\mathrm{tr}}^3(E_1 \times E_2 \times E_t)$ (see \Cref{sec_case2}). 
  Finally, an explicit example constructed in \Cref{counterexample} shows that the assumption in \Cref{ccc_Kummer_ord} cannot be easily removed. 

We remark that \Cref{ccc_Kummer_ord} is the first nontrivial application of the strategy  in \Cref{sec_strategy} for computing the order of non-rational constant cycle curves via intermediate Jacobians. We expect  to apply this general approach to  determine the order of other constructions of constant cycle curves on K3 surfaces, such as the ramification curve of a double plane.

\hfill\\
\textbf{Acknowledgement.}
I would like to thank my advisor  Daniel Huybrechts for numerous inspiring discussions and  valuable comments that greatly improved this paper. 

\hfill\\
\noindent\textbf{Convention.} Throughout this paper, we work over the field $\mbc$. For a smooth variety $X$, we denote by $\ch^k(X)$  the Chow group of cycle classes of codimension $k$ with integer coeffients, and $\ch^k(X)_{\lhom} \subset \ch^k(X)$ denotes the subgroup of homologically trivial cycles.  For any abelian group $A$ and $n \in \mbn$, we denote by $A[n]$ the subgroup of $n$-torsion elements. 
All curves are integral if not specified, and a smooth elliptic curve $E$ is always equipped with an origin $e$. By the notation $E_1 \sim E_2$ (resp.\ $E_1 \nsim E_2$) we mean that the two elliptic curves $E_1,E_2$ are isogeneous (resp.\ non-isogeneous).

\section{The order of constant cycle curves} \label{sec_review_strategy}
We review the notion of order of constant cycle curves and explain a general strategy  to compute the order of constant cycle curves by passing to the intermediate Jacobians. In this section, $X$ is always a complex projective K3 surface, and without loss of generality, we only consider integral constant cycle curves.
\subsection{Review: Definitions and examples} \label{sec_review}
Recall that  a curve $C \subset X$ is a constant cycle curve if $[x] = [y] \in \ch^2(X)$ for any $x,y \in \ch^2(X)$. Note that every point on a constant cycle curve  represents the Beauville--Voisin class $c_X \in \ch^2(X)$, see \cite[Lem.~2.2]{voisin2015rational}. 

For an integral curve $C \subset X$, denote by $\Delta_C \subset X \in C$ the diagonal cycle, and let   $\kappa_C$ be the image of $[\Delta_C] - c_X \times [C]$ under the following base change to the function field $k(C)$:
\begin{equation} \label{eq_kappa_c}
	\ch^2(X \times C) \ra \ch^2(X \times k(C)), \ \ [\Delta_C] - c_X \times [C] \mapsto \kappa_C.
\end{equation}
Then  we have the following equivalent definition of constant cycle curves.
\begin{lem}[{\cite[Lem.~3.4]{Huybrechts2014}}] 
	\label{def_ccc_kappa}
	An integral curve $C \subset X$ is a constant cycle curve if and only if the class $\kappa_C \in \ch^2(X \times k(C))$ is torsion.
\end{lem}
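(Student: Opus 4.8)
The plan is to read off the equivalence from the localization sequence comparing $\ch^2(X\times C)$ with $\ch^2(X\times k(C))$. Set $\gamma\coloneqq[\Delta_C]-c_X\times[C]\in\ch^2(X\times C)$, so that $\kappa_C$ is the restriction of $\gamma$ to the generic fibre of the projection $X\times C\ra C$. By localization, the kernel of $\ch^2(X\times C)\ra\ch^2(X\times k(C))$ consists of the classes represented by a cycle supported on finitely many fibres $X\times\{x\}$; hence $\kappa_C$ is $N$-torsion for some $N\geqslant 1$ precisely when $N\gamma$ is rationally equivalent to a cycle $\sum_i[D_i\times\{x_i\}]$ with $D_i\in\pic(X)$. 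I would then combine this reformulation with two inputs: Roitman's theorem, which since $\mathrm{Alb}(X)=0$ for a K3 surface yields that $\ch^2(X)$ is torsion-free; and the transfer argument showing that the kernel of $\ch^2(X\times k(C))\ra\ch^2(X\times\overline{k(C)})$ is torsion.

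To prove that $\kappa_C$ being torsion forces $C$ to be a constant cycle curve, suppose $N\gamma=\sum_i[D_i\times\{x_i\}]$, and for an arbitrary point $x\in C$ apply the Gysin pullback $\ch^2(X\times C)\ra\ch^2(X\times\{x\})=\ch^2(X)$ along the inclusion of the fibre. Every fibre-supported cycle maps to zero: if $x\neq x_i$ the supports are disjoint, while if $x=x_i$ the contribution is $c_1$ of the (trivial) normal bundle of the fibre capped with $[D_i\times\{x_i\}]$, hence zero. Since $\gamma$ restricts to $[x]-c_X$ on the fibre, we get $N([x]-c_X)=0$ in $\ch^2(X)$, and torsion-freeness forces $[x]=c_X$ for every $x\in C$; thus $C$ is a constant cycle curve.

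The converse is where the real content lies, and I expect it to be the main obstacle. Assume $C$ is a constant cycle curve, so $[x]=c_X$ in $\ch^2(X)$ for every closed point $x\in C$. I would first show that $\gamma$ becomes trivial after passing to the algebraic closure, i.e.\ that $\overline{\gamma}\coloneqq\gamma|_{X\times\overline{k(C)}}=0$ in $\ch^2(X\times\overline{k(C)})$; here $\overline{\gamma}=[\overline{\eta}]-c_X|_{\overline{k(C)}}$, where $\overline{\eta}$ is the geometric generic point of $C$ regarded as a point of $X$ via $C\subset X$. This is exactly the point at which the constant-cycle hypothesis must be used globally, not just fibre by fibre, and it is the step I anticipate to be delicate. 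The standard Bloch--Srinivas bookkeeping carries it out: the locus $\{x\in X:[x]=c_X\}$ is a countable union of constructible subsets of $X$, so, $C$ being integral of dimension one over $\mbc$ and hence not a countable union of proper closed subsets, one of these subsets contains a dense open $U\subseteq C$; passing to the generic point of $U$ (possibly after a finite base change $C'\ra C$) and spreading out yields $\gamma|_{X\times U'}=0$ for some open $U'\subseteq C'$, which is precisely $\overline{\gamma}=0$.

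It remains to upgrade $\overline{\gamma}=0$ to the torsion statement. Since Chow groups commute with filtered colimits of fields, $\gamma$ already maps to $0$ in $\ch^2(X\times K')$ for some finite extension $K'/k(C)$, say of degree $d$; as the composition of the restriction $\ch^2(X\times k(C))\ra\ch^2(X\times K')$ with the corestriction $\ch^2(X\times K')\ra\ch^2(X\times k(C))$ equals multiplication by $d$, we get $d\,\kappa_C=0$, so $\kappa_C$ is torsion. In short, the whole argument reduces to the vanishing $\overline{\gamma}=0$, which is the only non-formal ingredient.
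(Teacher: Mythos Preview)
The paper does not supply a proof of this lemma; it is quoted directly from \cite[Lem.~3.4]{Huybrechts2014} as background. Your argument is correct and is essentially Huybrechts' original proof: Gysin restriction to fibres together with Roitman's torsion-freeness of $\ch^2(X)$ for the implication ``$\kappa_C$ torsion $\Rightarrow$ constant cycle'', and the Bloch--Srinivas spreading-out (countable union of closed loci where a fixed rational equivalence is witnessed, uncountability of $\mathbb{C}$, lift after a finite base change, then restriction--corestriction) for the converse. One small point worth making explicit: in the converse you silently use that the common class on $C$ equals $c_X$, which is exactly the content of \cite[Lem.~2.2]{voisin2015rational} invoked by the paper; and for the Gysin step you should pass to the normalization of $C$ if $C$ is singular, so that $X\times C$ is smooth and the refined pullback is available.
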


 This  motivates the following definition of the order of constant cycle curves.

\begin{defn}[{\cite[Def.~3.5]{Huybrechts2014}}] \label{def_ord_kappa}
	The \textit{order} of an integeral constant cycle curve $C \subset X$,  denoted by $\ord(C)$, is defined as the order of the associated torsion class $\kappa_C \in \ch^2(X \times k(C))$.
\end{defn}

We summarize what we know about the order of some typical constructions of constant cycle curves on K3 surfaces below. Note that only in rare cases  the explicit order is known.
\begin{exmp}[{\cite[Sec.~6-7]{Huybrechts2014}}]  \label{order_factor} 
	(1) A rational curve $C \subset X $ is of  order one. This is because $\ch^2(X \times \mbp^1) \simeq \ch^2(X) \otimes [\mbp^1] \oplus \ch^1(X) \otimes \ch^1(\mbp^1)$. \\
	(2) Any curve $C \subset X$  contained in the fixed locus of a non-symplectic automorphism of $X$ of order $n$ is a constant cycle curve with $\ord(C) \mid n$. For example, the ramification locus $B$ of a double plane $X \ra \mbp^2$ branched over a sextic has $\ord(B) \mid 2$. In particular, any curve $C$ contained in the fixed loci of two non-symplectic automorphisms of $X$ of coprime order has $\ord(C) = 1$.
	\\ 
	(3) Let $X = \textup{Kum}(E_1 \times E_2)$ be a Kummer surface, and $p \colon X \ra E_1 \times E_2/\pm \ra E_1/\pm \simeq \mbp^1$ be the natural elliptic fibration. For any torsion point $ t \in E_1$ of order $n >2$,  the fibre $E_{t} \coloneqq p^{-1}(\bar{t}) \simeq E_2$ is a smooth elliptic constant cycle curve with  $\ord(E_t) \mid n$. Here $\bar{t}$ denotes the image of $t$ under $E_1 \ra E_1 /\pm \simeq \mbp^1$.
\end{exmp} 

	To determine the order  $\ord(C)$ of  a non-rational constant cycle curve $C \subset X$, it is often more practical to work in $\ch^2(X \times C)$   rather than $\ch^2(X \times k(C))$. For this purpose, we reformulate \Cref{def_ccc_kappa} and  \Cref{def_ord_kappa}  in terms of  the decomposability of the diagonal class $[\Delta_C] \in \ch^2(X \times C)$ as follows.
	\begin{lem}\label{def_ord_equiv}
		An integral curve $C \subset X$ is a constant cycle curve if and only if there exists an integer $N > 0$ such that $N[\Delta_C]$ is \textit{decomposable}, namely 
		\begin{equation*}
			N[\Delta_C] = \sum_i D_i \times \alpha_i +
			\beta \times [C] \in \ch^2(X \times C)
		\end{equation*}
		for some $D_i \in \ch^1(X), \alpha_i \in \ch^1(C)$ and $\beta \in \ch^2(X)$. 
		
		In this case, we have  $\beta = N \cdot c_X \in \ch^2(X)$. In addition, the order $\ord(C)$ of the constant cycle curve $C$ is the minimal positive integer $N$ such that  $N[\Delta_C]$ is decomposable, or equivalently, such that
		$N([\Delta_C] - c_X \times [C])$ lies in the image of $\ch^1(X) \otimes \ch^1(C) \ra \ch^2(X \times C)$.
	\end{lem}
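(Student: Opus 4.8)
The plan is to move the whole question into the generic fibre via the flat base-change map $\rho\colon\ch^2(X\times C)\ra\ch^2(X\times k(C))$ of \eqref{eq_kappa_c}, for which $\rho\bigl([\Delta_C]-c_X\times[C]\bigr)=\kappa_C$ by construction. The structural input is the localization exact sequence for Chow groups (\cite[Prop.~1.8]{fulton1998intersection}): taking the limit over all nonempty open $U\subseteq C$, so that $\varinjlim_U\ch^2(X\times U)=\ch^2(X\times k(C))$, it becomes
\begin{equation*}
	\bigoplus_{c\in C\ \mathrm{closed}}\ch^1(X)\ \longrightarrow\ \ch^2(X\times C)\ \xrightarrow{\ \rho\ }\ \ch^2(X\times k(C))\ \longrightarrow\ 0,\qquad (D_c)_c\longmapsto\textstyle\sum_c D_c\times\{c\}.
\end{equation*}
Since $\ch^1(C)=\ch_0(C)$ is generated by classes of closed points, the image of the first arrow is exactly the subgroup $P\coloneqq\mathrm{im}\bigl(\ch^1(X)\otimes\ch^1(C)\ra\ch^2(X\times C)\bigr)$ from the ``equivalently'' clause, so the backbone identity is $\ker\rho=P$. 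I would also record that $\rho$ annihilates any class supported on finitely many fibres of $X\times C\ra C$, that $\rho(\beta\times[C])$ is the pull-back of $\beta$ to $X\times k(C)$, and that $\rho([\Delta_C])$ is the class $[\eta]$ of the tautological $k(C)$-point of $X\times k(C)$ defined by the generic point of $C$.

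With this, one implication is immediate: if $C$ is a constant cycle curve, then $\kappa_C$ is torsion by \Cref{def_ccc_kappa}, of some order $N$, whence $N\bigl([\Delta_C]-c_X\times[C]\bigr)\in\ker\rho=P$, and rearranging gives a decomposition $N[\Delta_C]=N\,c_X\times[C]+\sum_i D_i\times\alpha_i$ with $\beta=N\,c_X$. Running the same computation in both directions shows that, for every $N>0$, the conditions $N[\Delta_C]=\sum_i D_i\times\alpha_i+N\,c_X\times[C]$ (for some $D_i,\alpha_i$), $\ N\bigl([\Delta_C]-c_X\times[C]\bigr)\in P$, and $N\kappa_C=0$ are mutually equivalent. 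Therefore, \emph{once it is known that $\beta$ in an arbitrary decomposition is forced to equal $N\,c_X$}, the set of $N$ with $N[\Delta_C]$ decomposable equals the set of $N$ with $N\kappa_C=0$, so its minimum is $\ord(\kappa_C)=\ord(C)$ by \Cref{def_ord_kappa}; and the final ``equivalently'' clause is just the identification $P=\mathrm{im}(\ch^1(X)\otimes\ch^1(C))$ recorded above.

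The crux is therefore the claim that \emph{if $N[\Delta_C]=\sum_i D_i\times\alpha_i+\beta\times[C]$, then $\beta=N\,c_X$ and $C$ is a constant cycle curve}. I would prove this by applying $\rho$ to obtain $N[\eta]=\beta_{k(C)}$ in $\ch^2(X\times k(C))$, and then specialising: for each closed point $c\in C$ and each point $\tilde c$ of the normalisation $\widetilde C$ over $c$, the discrete valuation ring $\mso_{\widetilde C,\tilde c}$ (with fraction field $k(C)$ and residue field $\mbc$) yields a specialisation homomorphism $\ch^2(X\times k(C))\ra\ch^2(X)$ (see \cite[\S20.3]{fulton1998intersection}) that retracts $\beta\mapsto\beta_{k(C)}$ and carries $[\eta]$ to $[c]$; applying it gives $N[c]=\beta$ in $\ch^2(X)$ for \emph{every} closed point $c\in C$. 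In particular $N([c]-[c'])=0$ for all closed $c,c'\in C$; since $[c]-[c']\in\ch^2(X)_{\lhom}$, which is torsion-free for a K3 surface by Roitman's theorem (the Albanese of $X$ vanishes), all points of $C$ represent one class $\gamma$, so $C$ is a constant cycle curve and $\beta=N\gamma$. Finally $\gamma=c_X$, because every point of a constant cycle curve represents the Beauville--Voisin class (\cite[Lem.~2.2]{voisin2015rational}), so $\beta=N\,c_X$ as asserted.

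I expect this last paragraph to be the main obstacle: the localization sequence turns everything else into formal bookkeeping, but upgrading the \emph{generic} identity $N[\eta]=\beta_{k(C)}$ to the identity $N[c]=\beta$ at \emph{every} closed point of $C$ — which is what pins down $\beta=N\,c_X$ and shows a priori that $C$ is a constant cycle curve — requires the specialisation homomorphism (routed through $\widetilde C$ precisely so as to bypass the singular points of $C$, where the Gysin restriction to a fibre of $X\times C\ra C$ is not available) together with the two non-formal facts about $0$-cycles on a K3 surface: Roitman's theorem and the fact that every point of a constant cycle curve represents the Beauville--Voisin class.
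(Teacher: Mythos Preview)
Your proof is correct and rests on the same backbone as the paper's: the exact sequence
\[
\ch^1(X)\otimes\ch^1(C)\ \longrightarrow\ \ch^2(X\times C)\ \longrightarrow\ \ch^2(X\times k(C))\ \longrightarrow\ 0,
\]
which the paper simply quotes as \Cref{lem_bloch} and then declares that the lemma ``follows immediately''. Your write-up is in fact more complete than the paper's in two respects. First, you derive the sequence directly from localization, so it visibly holds for an arbitrary integral curve $C$, whereas the version cited in \Cref{lem_bloch} is stated only for smooth $C$. Second, and more importantly, you actually justify the clause $\beta = N\,c_X$: your specialization argument (routed through the normalization $\widetilde C$) together with Roitman's theorem is exactly what is needed to pass from the generic identity $N[\eta]=\beta_{k(C)}$ to $N[c]=\beta$ at every closed point, and hence to conclude both that $C$ is a constant cycle curve and that $\beta=N\,c_X$. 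The paper suppresses this step entirely; what you identified as ``the main obstacle'' is precisely the part the paper leaves to the reader.
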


The above lemma follows immediately from  the fact below: 
\begin{lem}[{\cite[Lem.~1A.1]{Bloch_2010}}]\label{lem_bloch}
	Let $S$ be a smooth surface over an algebraically closed field and $C \subset S$ be an irreducible smooth curve. Then 
		$\ch^2(S \times k(C)) = \varinjlim_{U \subset C} \ch^2(S \times U)$,
	where $U$ goes over all nonempty open subsets of $C$. 
	In particular, the following sequence is exact:
	\begin{align*}
		\ch^1(S) \otimes \ch^1(C) &\ra  \ch^2(S \times C) \xrightarrow{r} \ch^2(S  \times k(C)) \ra 0\\
		\alpha \otimes \beta &\mapsto \big(p_{S}^{\ast}\alpha . p_{C}^{\ast}\beta \big). 
	\end{align*}
	\end{lem}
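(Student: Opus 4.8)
The plan is to obtain Lemma~\ref{lem_bloch} from two standard inputs: the \emph{continuity} of Chow groups under filtered inverse limits of schemes with affine transition morphisms, and the \emph{localization} exact sequence. I do not expect a genuine obstacle here: the proof is citation-driven, the only non-elementary ingredient being continuity of Chow groups, which I would simply quote.

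First, for the identity $\ch^{2}(S\times k(C))=\varinjlim_{U}\ch^{2}(S\times U)$: here $S\times k(C)$ abbreviates $S\times_{\mbc}\spec k(C)$, the generic point $\spec k(C)$ of the irreducible curve $C$ is the filtered inverse limit $\varprojlim_{U}U$ over all nonempty open $U\subset C$ (the system being directed since $C$ is irreducible, with transition maps open immersions, hence affine), and so $S\times k(C)=\varprojlim_{U}(S\times U)$. The continuity property of Chow groups (see e.g.~\cite{fulton1998intersection}) then gives $\ch^{k}(S\times k(C))=\varinjlim_{U}\ch^{k}(S\times U)$ for every $k$, in particular for $k=2$.

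For the exact sequence, fix a nonempty open $U\subset C$ with finite complement $\{p_{1},\dots,p_{m}\}$. Each inclusion $\iota_{i}\colon S\times\{p_{i}\}\hookrightarrow S\times C$ is closed of pure codimension one, since $p_{i}$ is a codimension-one point of $C$; hence localization gives the exact sequence
\[
	\bigoplus_{i=1}^{m}\ch^{k-1}(S)\ \longrightarrow\ \ch^{k}(S\times C)\ \longrightarrow\ \ch^{k}(S\times U)\ \longrightarrow\ 0 ,
\]
where the $i$-th pushforward is $\alpha\mapsto\iota_{i\ast}(\iota_{i}^{\ast}p_{S}^{\ast}\alpha)=p_{S}^{\ast}\alpha\cdot p_{C}^{\ast}[p_{i}]=\alpha\times[p_{i}]$ by the projection formula together with $\iota_{i\ast}(1)=p_{C}^{\ast}[p_{i}]$. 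Now take $k=2$ and pass to the colimit over $U$: a class in $\ch^{2}(S\times C)$ lies in $\ker r$ precisely when it vanishes in $\ch^{2}(S\times U)$ for some $U$, i.e.\ precisely when it lies in the image of $\bigoplus_{p\in C\setminus U}\ch^{1}(S)$ for some $U$. Since every closed point of $C$ lies outside a suitable $U$, it follows that $\ker r$ is the subgroup generated by all external products $\alpha\times[p]$ with $\alpha\in\ch^{1}(S)$ and $p\in C$ a closed point, and surjectivity of $r$ is built into the colimit description. Finally, $\ch^{1}(C)=\pic(C)$ is generated by the classes of closed points of the smooth curve $C$, so the image of the map $\ch^{1}(S)\otimes\ch^{1}(C)\to\ch^{2}(S\times C)$, $\alpha\otimes\beta\mapsto p_{S}^{\ast}\alpha\cdot p_{C}^{\ast}\beta$, coincides with the subgroup generated by those $\alpha\times[p]$, i.e.\ with $\ker r$. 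This gives the asserted exact sequence; the only points requiring attention are the codimension count in the localization sequence and the correct passage to the colimit in identifying $\ker r$.
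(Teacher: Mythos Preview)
Your proof is correct and is precisely the standard argument: continuity of Chow groups under the filtered limit $\spec k(C)=\varprojlim_U U$, together with the localization sequence for $S\times(C\setminus U)\hookrightarrow S\times C$, then a passage to the colimit. The paper does not actually supply its own proof of this lemma; it merely quotes it from Bloch \cite[Lem.~1A.1]{Bloch_2010}, and what you have written is essentially Bloch's argument. One minor quibble: the continuity statement you invoke is not stated in quite this form in \cite{fulton1998intersection}; it is cleaner to cite Bloch directly, or to note that it follows at once from the localization sequence itself (every cycle on $S\times k(C)$ spreads out over some $U$, and rational equivalence likewise spreads out).
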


\subsection{Strategy to compute the order} \label{sec_strategy}
Determining the order of a non-rational constant cycle curve $C \subset X$ by working in the Chow group is usually a challenging task. In this section, we follow \cite[Sec.~5.2]{Huybrechts2014} and review the basic idea of computing $\ord(C)$ by passing to the transcendental intermediate Jacobian $J_{\mathrm{tr}}^3(X \times C)$. 

For a K3 surface $X$ and any smooth integral curve $C$, the short exact sequence of lattices
$$
0 \ra \ns(X) \ra H^2(X,\mbz) \ra T^{\prime}(X) \coloneqq \frac{H^2(X,\mbz)}{\ns(X)} \ra 0
$$
induces an extension of the intermediate Jacobian $J^3(X \times C)$:
\begin{equation}\label{ext_IJ}
	0 \ra J_{\mathrm{alg}}^{3}(X \times C) \ra J^3(X \times C) \ra J_{\mathrm{tr}}^{3}(X \times C) \ra 0,
\end{equation}
where 
\begin{align} \label{def_IJ_tr}
	J_{\mathrm{alg}}^3(X \times C) \coloneqq \frac{F^2(\ns(X) \otimes H^1(C))^{\ast}}{\ns(X) \otimes H^1(C,\mbz)} \simeq J(C)^{\oplus \rho(X)}, \ \  
	J_{\mathrm{tr}}^3(X \times C) \coloneqq  \frac{F^2(T^{\prime}(X) \otimes H^1(C))^{\ast}}{T^{\prime}(X) \otimes H^1(C,\mbz)}
\end{align} are the algebraic part and transcendental part of  $J^3(X \times C)$ respectively, and $\rho(X)$ is the Picard rank of $X$. 
By definition, the Abel--Jacobi map \begin{equation}\label{eq_AJmap}
	\Phi_X \colon \ch^2(X \times C)_{\lhom} \ra J^3(X \times C) = \frac{F^2(H^2(X) \otimes H^1(C))^{\ast}}{H_2(X,\mbz) \otimes H_1(C,\mbz)}, \ \ [Z] \mapsto \int_{\Gamma} \ \textup{with} \ \ \partial\Gamma = Z
\end{equation}
maps the image of $\ch^1(X) \otimes \ch^1(C)_{\lhom} \ra \ch^2(X \times C)_{\lhom}$ to $J_{\mathrm{alg}}^3(X \times C)$, hence we have the following commutative diagram with every row exact:
\begin{equation}\label{diag_algtrIJ}
	\begin{tikzcd}[sep=1.8em, font=\small]
	& &\ch^1(X) \otimes \ch^1(C)_{\lhom} \arrow[r] \arrow[d,"\Phi_{X}^{\mathrm{alg}}"] &\ch^2(X \times C)_{\lhom} \arrow[r] \arrow[d,"\Phi_{X}"] &\frac{\ch^2(X \times C)_{\lhom}}{\ch^1(X) \otimes \ch^1(C)_{\lhom}} \arrow[r] \arrow[d,"\Phi_X^{\mathrm{tr}}"] &0\\
	&0 \arrow[r] &J_{\mathrm{alg}}^3(X \times C) \arrow[r] &J^3(X \times C) \arrow[r, "p_{\mathrm{tr}}"] &J_{\mathrm{tr}}^3(X \times C) \arrow[r] & 0
\end{tikzcd},
\end{equation}
where $\Phi_{X}^{\mathrm{alg}} = \id_{\ch^1(X)} \otimes \Phi_C$ with $\Phi_C \colon  \ch^1(C)_{\lhom} \ra J(C)$ the Abel--Jacobi map, and $\Phi_X^{\mathrm{tr}}$ is the induced map. Note that the first row is the restriction of the short exact sequence in \Cref{lem_bloch} to the homologically trivial subgroups and one indeed has $\frac{\ch^2(X \times C)_{\lhom}}{\ch^1(X) \otimes \ch^1(C)_{\lhom}}  \simeq 
\ch^2(X \times k(C))_{0}$, the degree zero subgroup of $\ch^2(X \times k(C))$. We call  $$\Phi_X^{\mathrm{tr}} \colon \ch^2(X \times k(C))_{0} \ra J_{\mathrm{tr}}^3(X \times C)$$ the transcendental Abel--Jacobi map (note that this definition is non-standard).

Now, recall from \Cref{def_ord_kappa} that the order $\ord(C)$ of an integral constant cycle curve $C \subset X$  is defined as the order of the associated torsion class $\kappa_C \coloneqq (\Delta_C-c_X \times C)|_{X \times k(C)}$ in $\ch^2(X \times k(C))_{0}$. The following observation, using a result of Colliot-Th\'elène, Sansuc and Soul{\'e} \cite{CTSS83AJcodimtwo}, enables us to  compute  $\ord(C)$ in  $J_{\mathrm{tr}}^3(X \times C)$:
\begin{lem} \label{inj_tr_AJ}
	Let $X$ be a K3 surface and $C$ be a smooth integral curve. Then the transcendental Abel--Jacobi map $\Phi_X^{\mathrm{tr}}  \colon \ch^2(X \times k(C))_{0} \ra J_{\mathrm{tr}}^3(X \times C)$ is injective on the  torsion subgroup. 

\end{lem}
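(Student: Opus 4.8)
The plan is to reduce the statement to the theorem of Colliot-Thélène, Sansuc and Soulé on the injectivity of the Abel--Jacobi map on codimension-two torsion. Recall that by \Cref{lem_bloch} we have $\ch^2(X \times k(C))_0 = \varinjlim_{U} \ch^2(X \times U)_0$, where $U$ ranges over nonempty open subsets of $C$; since torsion commutes with filtered colimits, every torsion class $\kappa \in \ch^2(X \times k(C))_0$ of order $N$ lifts to a class $\tilde\kappa \in \ch^2(X \times U)_0$ for some open $U \subset C$, with $N\tilde\kappa$ lying in the image of $\ch^1(X)\otimes\ch^1(U)$ (possibly after shrinking $U$ further so that the defining relation holds already over $U$). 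First I would observe that $X \times U$ is a smooth quasi-projective threefold, so the result of \cite{CTSS83AJcodimtwo} applies: the Abel--Jacobi map $\ch^2(X \times U)_{\lhom,\mathrm{tors}} \ra J^3(X \times U)$ is injective on torsion. The subtlety is that $U$ is open, not proper, so $J^3(X\times U)$ is a priori not the honest intermediate Jacobian $J^3(X\times C)$; but the CTSS statement is for any smooth variety, and I would rather work on $X \times C$ directly as follows.

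The cleaner route is: given a torsion class $\kappa_C \in \ch^2(X\times k(C))_0$ of order $N$, lift it to $\tilde\kappa \in \ch^2(X\times C)_{\lhom}$ (using the surjectivity in \Cref{lem_bloch} and that the fiber of $r$ over $0$ is in the homologically trivial part, since $\ch^1(X)\otimes\ch^1(C)_{\lhom}$ maps into $\ch^2(X\times C)_{\lhom}$ and the Néron--Severi part can be corrected by adjusting the lift). Then $N\tilde\kappa \in \ch^1(X)\otimes\ch^1(C)_{\lhom} + (\text{image of } \ch^1(X)\otimes\ch^1(C))$; after subtracting an element of $\ch^1(X)\otimes\ch^1(C)$ from $\tilde\kappa$ we may assume $N\tilde\kappa$ lies in the image of $\ch^1(X)\otimes\ch^1(C)_{\lhom}$. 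Applying $\Phi_X$ and chasing the diagram \eqref{diag_algtrIJ}: $p_{\mathrm{tr}}(\Phi_X(\tilde\kappa)) = \Phi_X^{\mathrm{tr}}(\kappa_C)$, and $N\Phi_X(\tilde\kappa) = \Phi_X^{\mathrm{alg}}(N\tilde\kappa') \in J^3_{\mathrm{alg}}(X\times C)$ for a suitable representative. Hence $N\cdot\Phi_X^{\mathrm{tr}}(\kappa_C) = p_{\mathrm{tr}}(N\Phi_X(\tilde\kappa)) = 0$, so $\Phi_X^{\mathrm{tr}}(\kappa_C)$ is $N$-torsion in $J^3_{\mathrm{tr}}(X\times C)$, which is no obstruction by itself.

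To get injectivity I would now suppose $\Phi_X^{\mathrm{tr}}(\kappa_C) = 0$ and deduce $\kappa_C = 0$. From the diagram, $\Phi_X(\tilde\kappa) \in J^3_{\mathrm{alg}}(X\times C)$; since $J^3_{\mathrm{alg}}(X\times C)\simeq J(C)^{\oplus\rho(X)}$ is a divisible group and $\Phi^{\mathrm{alg}}_X = \id\otimes\Phi_C$ is surjective onto it (as $\Phi_C$ is surjective), there is $\eta \in \ch^1(X)\otimes\ch^1(C)_{\lhom}$ with $\Phi_X(\eta) = \Phi_X(\tilde\kappa)$ — but more to the point, $\tilde\kappa - (\text{image of }\eta)$ is homologically trivial, codimension two, and Abel--Jacobi trivial, and I would like to conclude it is torsion, then zero in $\ch^2(X\times k(C))_0$. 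Here is where the CTSS input is genuinely needed and is the main obstacle: one must pass to an open $U \subset C$ on which $\tilde\kappa - \eta$ becomes torsion. Concretely, $N(\tilde\kappa-\eta)$ is supported on a proper closed subset, hence vanishes in $\ch^2(X\times k(C))$, i.e.\ $N(\tilde\kappa-\eta)|_{X\times U} = 0$ for some $U$; thus $(\tilde\kappa-\eta)|_{X\times U}$ is an $N$-torsion class in $\ch^2(X\times U)_{\lhom}$ with trivial Abel--Jacobi invariant in $J^3(X\times U)$ (the restriction map on intermediate Jacobians is compatible, and the transcendental component is what we killed). By the Colliot-Thélène--Sansuc--Soulé injectivity on the smooth threefold $X\times U$, this class is zero, so $\kappa_C = r(\tilde\kappa-\eta) = 0$ in $\ch^2(X\times k(C))_0$.

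The main obstacle is the bookkeeping at the interface between $X\times C$ and $X\times U$: one must check that Abel--Jacobi triviality in $J^3_{\mathrm{tr}}(X\times C)$ descends to Abel--Jacobi triviality of the restricted class in $J^3(X\times U)$ — equivalently, that the algebraic part $J^3_{\mathrm{alg}}(X\times U)$ does not obstruct, which follows because the localized class is a restriction of $\tilde\kappa-\eta$ and $\eta$ was chosen to absorb the algebraic part. I would handle this by noting that the natural map $J^3(X\times C)\to J^3(X\times U)$ and the compatibility of $\Phi_X$ with restriction reduce the vanishing of $\Phi_{X\times U}((\tilde\kappa-\eta)|_U)$ to the vanishing of $\Phi_X(\tilde\kappa-\eta)$, which holds by construction after the correction by $\eta$. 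Everything else — divisibility of $J^3_{\mathrm{alg}}$, exactness of the rows, torsion commuting with colimits — is formal.
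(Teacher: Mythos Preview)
Your strategy and the paper's are the same at the core---reduce to the injectivity result of Colliot-Th\'el\`ene--Sansuc--Soul\'e---but you take an unnecessary detour through an open $U\subset C$, which forces you to talk about $J^3(X\times U)$ for non-proper $U$ and to worry about compatibility of restriction with cycle maps. The paper avoids all of this and stays on the projective threefold $X\times C$ by two simplifications you came close to but did not make. First, using that $\ch^1(X)\otimes\ch^1(C)_{\lhom}$ is divisible, one lifts the torsion class $\bar\alpha$ directly to a \emph{torsion} class $\alpha\in\ch^2(X\times C)_{\lhom}$ (not merely a class whose $N$-th multiple is decomposable). Second, $\Phi_X^{\mathrm{alg}}=\id\otimes\Phi_C$ is an \emph{isomorphism}, not just surjective as you use; hence the correcting class $\beta$ with $\Phi_X^{\mathrm{alg}}(\beta)=\Phi_X(\alpha)$ is automatically torsion. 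Then $\alpha-\beta$ is a torsion codimension-two class on the smooth projective $X\times C$ lying in $\ker\Phi_X$, and CTSS gives $\alpha=\beta$ immediately, so $\bar\alpha=0$.

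Your route is salvageable (CTSS does hold for arbitrary smooth varieties, and the restriction maps on Deligne/\'etale cohomology are compatible with cycle classes), but the proposal's treatment of $J^3(X\times U)$ is hand-waved, and the entire passage to $U$ is avoidable once you recognize the two points above. The moral: exploit divisibility to make the lift torsion from the start, and use that $\Phi_X^{\mathrm{alg}}$ is injective, not just surjective.
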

\begin{proof} 
	Let $\bar{\alpha} \in \ker(\Phi_X^{\mathrm{tr}})$ be a torsion class. Since  $\ch^1(X) \otimes \ch^{1}(C)_{\lhom}$ is divisible, we can  lift $\bar{a}$ to a  torsion class $\alpha \in \ch^2(X \times C)_{\lhom}$. 
	As  rows in \eqref{diag_algtrIJ} are exact and  $\Phi_X^{\mathrm{alg}}$ is an isomorphism, there exists  $\beta \in \ch^1(X) \otimes \ch^1(C)_{\lhom}$ such that $\Phi_X^{\mathrm{alg}}(\beta) = \Phi_X(\alpha)$. Since $\alpha$ is torsion by construction, so is $\beta$, and note that the torsion class $\beta-\alpha$ is contained in $\ker(\Phi_X)$. Then the injectivity of $\Phi_X$ on codimension-two torsion classes (see \cite{CTSS83AJcodimtwo}) forces $\beta = \alpha$, hence $\bar{\alpha} = 0$, i.e. there is no nonzero  torsion classes in $\ker(\Phi_X^{\mathrm{tr}})$. 
\end{proof}

The observation below follows directly from \Cref{inj_tr_AJ} and provides a practical method to compute $\ord(C)$ in the transcendental intermediate Jacobian $J_{\mathrm{tr}}^3(X \times C)$.
\begin{cor} \label{ord_comp_prac}
	Let $C$ be a constant cycle curve on a K3 surface $X$, and  $[Z_C] \in \ch^2(X \times C)_{\lhom}$ be a torsion class that lifts $\kappa_C \in \ch^2(X \times k(C))_{0}$. Then we have	 $$\ord(C) \coloneqq \ord(\kappa_C) = \ord(\Phi_{X}^{\mathrm{tr}}([\kappa_C])) = \ord((p_{\mathrm{tr}}  \circ\Phi_{X})[Z_C]),$$ which  by the diagram \eqref{diag_algtrIJ} is simply the minimal positive integer $N$ such that $N[Z_C]$ lies in the image of  $ \ch^1(X) \otimes \ch^1(C)_{\lhom} \ra \ch^2(X \times C)_{\lhom}$.
\end{cor}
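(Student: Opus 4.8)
The plan is to read the asserted chain of identities as three short, purely formal observations, each resting on the commutative diagram \eqref{diag_algtrIJ} together with the inputs already established. The first identity $\ord(C) = \ord(\kappa_C)$ is nothing but \Cref{def_ord_kappa}, so the real content is to transport the order of the torsion class $\kappa_C$ first into $J_{\mathrm{tr}}^3(X \times C)$ and then to reinterpret it inside $\ch^2(X \times C)_{\lhom}$.

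For the identity $\ord(\kappa_C) = \ord(\Phi_X^{\mathrm{tr}}(\kappa_C))$ I would single out the elementary fact that a group homomorphism $\phi$ that is injective on torsion elements preserves the order of every torsion element $a$ of its source: indeed $n\phi(a) = 0$ if $\ord(a) = n$, while $m\phi(a) = 0$ gives $\phi(ma) = 0$ with $ma$ torsion, hence $ma = 0$ and $n \mid m$. Since $C$ is a constant cycle curve, $\kappa_C$ is torsion by \Cref{def_ccc_kappa}, and $\Phi_X^{\mathrm{tr}}$ is injective on torsion by \Cref{inj_tr_AJ}, so this applies directly. To replace $\kappa_C$ by its chosen lift $[Z_C]$, I would use commutativity of the right-hand square of \eqref{diag_algtrIJ}: since $[Z_C]$ maps to $\kappa_C$ under the surjection $\ch^2(X \times C)_{\lhom} \ra \ch^2(X \times k(C))_0$ supplied by \Cref{lem_bloch}, one gets $\Phi_X^{\mathrm{tr}}(\kappa_C) = p_{\mathrm{tr}}(\Phi_X([Z_C]))$, and all three orders coincide.

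It remains to identify $\ord\big(p_{\mathrm{tr}}(\Phi_X([Z_C]))\big)$ with the minimal $N > 0$ such that $N[Z_C]$ lies in the image of $\ch^1(X) \otimes \ch^1(C)_{\lhom} \ra \ch^2(X \times C)_{\lhom}$; for this I would invoke exactness of the top row of \eqref{diag_algtrIJ} (the homologically trivial part of the sequence in \Cref{lem_bloch}), which says precisely that $N[Z_C]$ lies in that image if and only if its image $N\kappa_C$ vanishes in $\ch^2(X \times k(C))_0$, so the minimal such $N$ is $\ord(\kappa_C) = \ord(C)$. I do not anticipate a genuine obstacle: the statement is a diagram chase once the injectivity of $\Phi_X^{\mathrm{tr}}$ on torsion — ultimately the theorem of Colliot-Th\'el\`ene, Sansuc and Soul\'e \cite{CTSS83AJcodimtwo} — is available. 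The one point I would flag is that the hypothesis ``let $[Z_C]$ be a torsion lift of $\kappa_C$'' is non-vacuous: the divisibility of $\ch^1(X) \otimes \ch^1(C)_{\lhom}$ allows one to lift the torsion class $\kappa_C$ to a torsion class in $\ch^2(X \times C)_{\lhom}$, exactly as in the proof of \Cref{inj_tr_AJ}, and in practice one exhibits such a $[Z_C]$ concretely by correcting $[\Delta_C] - c_X \times [C]$ by a suitable element of $\ch^1(X) \otimes \ch^1(C)$ so that the result becomes homologically trivial.
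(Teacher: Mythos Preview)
Your proposal is correct and matches the paper's approach: the paper itself records no proof beyond the sentence ``follows directly from \Cref{inj_tr_AJ}'', and what you have written is precisely the diagram chase that sentence abbreviates. Your closing remark on the existence of a torsion lift is also on target and is exactly what the paper spells out immediately afterwards in \Cref{sum_strategy}.
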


\begin{rmk} \label{sum_strategy}
	Note that if a constant cycle curve $C \subset X$ has $\ord(C) \mid N$, then there always exists an $N$-torsion class  $[Z_C] \in \ch^2(X \times C)_{\lhom}$ that lifts $\kappa_C$. Indeed,  one can choose any point $y \in C$  and consider the modified diagonal class
	\begin{equation} \label{compactification}
		[Z_C] \coloneqq [\Delta_C] - [C] \times y -c_X \times [C] \in \ch^2(X \times C)_{\lhom}.
	\end{equation}
By \Cref{def_ord_equiv}, $N[Z_C]$ comes from $\ch^1(X) \otimes \ch^1(C)_{\lhom}$ and is not necessarily zero. However, since  $\ch^1(C)_{\lhom}$ is divisible, one can always modify the class $[Z_C]$ further by a cycle from $\ch^1(X) \otimes \ch^1(C)_{\lhom}$  to obtain an $N$-torsion class, which still lifts $\kappa_{t}^{\prime}$.

To sum up, an effective three-step strategy to compute $\ord(C)$ in $J_{\mathrm{tr}}^3(X \times C)$ is as follows: First, we find a torsion compactification $[Z_C] \in \ch^2(X \times C)_{\lhom}$ of $\kappa_C$, and then compute the image of $[Z_C]$ under the Abel--Jacobi map $\Phi_X$. Finally, we determine the order of the projection of $\Phi_X([Z_C])$ in $J_{\mathrm{tr}}^3(X \times C)$.
\end{rmk}

\begin{rmk} \label{tr_AJ_absurface}
The above construction of the transcendental Abel--Jacobi map can be extended to a surface $S$ with $H^1(S,\mbz) \neq 0$ (e.g.\ an abelian surface) as follows.  For any smooth integral curve $C$, as before, one defines \begin{align} 
	J_{\mathrm{alg}}^3(S \times C) \coloneqq \frac{F^2(\ns(S) \otimes H^1(C))^{\ast}}{\ns(S) \otimes H^1(C,\mbz)} \simeq J(C)^{\oplus \rho(S)} \ \ \textup{and} \ \  
	J_{\mathrm{tr}}^3(S \times C) \coloneqq  \frac{J^3(S \times C)}{J_{\mathrm{alg}}^3(S \times C)},
\end{align}
which fit into the commutative diagram \eqref{diag_algtrIJ}, and we have a well-defined transcendental Abel--Jacobi map
$\Phi_S^{\mathrm{tr}} \colon \ch^2(S \times k(C))_{0} \ra J_{\mathrm{tr}}^3(S \times C)$. 
 The same argument in \Cref{inj_tr_AJ} shows that $\Phi_S^{\mathrm{tr}}$ is  injective on the torsion subgroup. 
Note that in this case, the intermediate Jacobian $J^3(S \times C)$ splits into three direct summands corresponding to the Künneth decomposition of $H_3(S \times C,\mbz)$, and the intermediate Jacobian  \begin{align*}
	J_{\mathrm{tr}}^{2,1}(S \times C) \coloneqq \frac{F^2(T'(S) \otimes H^1(C))^{\ast}}{T'(S) \otimes H_1(C,\mbz)}, \ \ \textup{where} \ \ T'(S) \coloneqq \frac{H^2(S,\mbz)}{\textup{NS}(S)}
\end{align*} associated with the Hodge structure $T'(S) \otimes H^1(C)$ is a proper subtorus of $J_{\mathrm{tr}}^{3}(S \times C)$.
\end{rmk}

\section{Improving the upper bound for $\ord(E_t)$} \label{sec_order_Zt}
 Let  $t \in E_1$ be a torsion point of order $n > 2$, and let $E_t \subset X = \textup{Kum}(E_1 \times E_2)$ be the smooth elliptic constant cycle curve in  \Cref{order_factor}(3) with $\ord(E_t) \mid n$. The goal of this section is to improve this upper bound for even $n$ to $\ord(E_t) \mid \frac{n}{2}$. 
 
The main idea is to use the construction of the Kummer surface $X$  to control $\ord(E_t)$.  By \Cref{sec_review}, the class $\kappa_t \coloneqq (\Delta_{E_t} - c_X \times [E_t]) \mid_{X \times k(E_t)} \in \ch^2(X \times k(E_t))$ is torsion, and by definition  $\ord(E_t) = \ord(\kappa_{t})$. Following the strategy in  \Cref{sum_strategy}, it suffices to find a torsion compactification $[Z_t] \in \ch^2(X \times E_t)$ of $\kappa_t$ and determines its order in $J_{\mathrm{tr}}^3(X \times C)$.

Our candidate for the torsion compactification of $\kappa_t$ is
\begin{equation*} 
	[Z_{t}] \coloneqq [\Delta_{E_t}] -[ E_t \times e] - c_X \times [E_t] \in \ch^2(X \times E_t)_{\lhom}.
\end{equation*} A priori $[Z_t]$ is not torsion, and  computing its image under the Abel--Jacobi map is already a difficult task. However,  the following diagram, induced by the construction of $X$,
\begin{equation} \label{Kummer_diagram}
 	X \times E_t \xleftarrow{\pi} \Bl(E_1 \times E_2) \times E_t \xrightarrow{\phi}
 	E_1 \times E_2 \times E_t,
 \end{equation} enables us to identify $\ord(\kappa_t)$ with the order of a torsion class $\kappa_t^{\prime} \in
\ch^2(E_1 \times E_2\times k(E_t))$, see \Cref{order_kappa_same}. The class $\kappa_t^{\prime}$ has a torsion compactification  $[Z_t^{\prime}] \in \ch^2(E_1 \times E_2 \times E_t)_{\lhom}$, and one can compute $\ord([Z_t^{\prime}])$  explicitly in the intermediate Jacobian  $J^3(E_1 \times E_2 \times E_t)$ (see \Cref{order_CycleInProduct}). Then $\ord(E_t) = \ord(\kappa_t) = \ord(\kappa_t^{\prime}) \mid \ord([Z_t^{\prime}])$ gives the claimed upper bound.

\subsection{} \label{subsec_decomp_cycle}
 The main  input of our computation is the following   decomposition of one-cycles on the product $E_1 \times E_2$ of elliptic curves:
\begin{equation} \label{prod_decomposition}
	\ch^1(E_1 \times E_2) \simeq \ch^1(E_1) \oplus \ch^1(E_2) \oplus \textup{Hom}(J(E_1),J(E_2)),
\end{equation} 
 where the first two direct summands are given by pullbacks of zero cycles on $E_1,E_2$ respectively, and the projection of $\alpha \in \ch^1(E_1 \times E_2)$ to $\textup{Hom}(J(E_1),J(E_2))$ is its induced morphism on Jacobians. 	Note that the cycle classes of the three direct summands of  \eqref{prod_decomposition} are contained in  \begin{center}
	$H^2(E_1) \otimes H^0(E_2), H^0(E_1) \otimes H^2(E_2)$ and $\big(H^1(E_1) \otimes H^1(E_2)\big) \cap H^{1,1}(E_1 \times E_2)$
\end{center} respectively. In particular, under the intersection pairing, the direct summand $\textup{Hom}(J(E_1),J(E_2))$ is orthogonal to the other two summands.

For example, for an isogeny $f \colon E_1 \ra E_2$, its graph $\Gamma_f \subset E_1 \times E_2$ decomposes as  
	\begin{equation} \label{decomp_graph}
		[\Gamma_f]  = [E_1 \times e_2] + \deg(f) \cdot [e_1 \times E_2] + [f] \in \ch^1(E_1 \times E_2),
	\end{equation}
	 where $e_i \in E_i$ is the origin and $[f] \in \textup{Hom}(J(E_1), J(E_2))$ denotes the isogeny induced by $f$, viewed as a one-cycle class via the isomorphism \eqref{prod_decomposition}. In particular, for any elliptic curve $E$,  the graphs of $\textup{id}_E, -\textup{id}_{E}  \in \textup{End}(E)$ are the diagonal $\Delta$ and the antidiagonal $\Delta^{-}$ respectively, and they  decompose as
	\begin{equation} \label{diag_decomp}
	[\Delta] = [e \times  E] + [E \times e]   + [\textup{id}], \ \ 
		 [\Delta^{-}] = [e \times  E] + [E \times e]   + [-\textup{id}] \in \ch^1(E \times E).
	\end{equation}  

\subsection{} 
We denote by $e_i \in E_i$ the origin for $i = 1,2$, and choose $e \in E_t$ to be the point mapped to $e_2$ under the natural isomorphism $E_t \simeq E_2$. Consider the following homologically trivial class
\begin{equation} \label{eq_def_Zt}
	[Z_{t}] \coloneqq [\Delta_{E_t}] -[ E_t \times e] - [x_0 \times E_t] \in \ch^2(X \times E_t)_{\lhom}.
\end{equation}
Here we choose $x_0 \in X$ to be the image of $(t,e_2)$  under the quotient $\Bl(E_1 \times E_2) \ra X$. Since $x_0$ lies on the constant cycle curve $E_t$, we have $[x_0] = c_X$ in $\ch_0(X)$.  

The following result allows us to simplify the application of the strategy in \Cref{sec_strategy} by passing to the abelian threefold $E_1 \times E_2 \times E_2$, as explained at the beginning of \Cref{sec_order_Zt}. 
\begin{lem} \label{order_CycleInProduct}  
	Let $t \in E_1$ be a torsion point of order $n >2$ and $[Z_t] \in \ch^2(X \times E_t)$ be as above. Consider the following pullbacks of Chow groups induced by \eqref{Kummer_diagram}:
	\begin{equation*}
		\ch^2(X \times E_t) \xrightarrow{\pi^{\ast}} \ch^2 (\Bl(E_1 \times E_2)\times E_t)) \xleftarrow{\phi^{\ast}}\ch^2 (E_1 \times E_2 \times E_t).
	\end{equation*} 
	\begin{enumerate}[\normalfont(a)]
		\item The  torsion class  $[Z_{t}^{\prime}] \coloneqq 2 ([t]-[e_1]) \times [\id] \in \ch^2(E_1 \times E_2 \times E_t)_{\lhom}$ satisfies  $\pi^{\ast}[Z_{t}] = \phi^{\ast}[Z_{t}^{\prime}]$.
		\item The order of $[Z_{t}^{\prime}]$ is
		$d(n) \coloneqq
		\begin{cases}
			n / 2,\quad & 2 \mid n \\
			n,\quad & 2 \nmid n.
		\end{cases}
		$
	\end{enumerate} 
\end{lem}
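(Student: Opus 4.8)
The plan is to prove (a) by an explicit pullback computation through the diagram \eqref{Kummer_diagram}, and (b) by reducing — via the injectivity of the Abel--Jacobi map on codimension-two torsion \cite{CTSS83AJcodimtwo} — to an elementary statement in the integral cohomology of $E_1\times E_2\times E_t$.

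\emph{Part (a).} Write $q\colon\Bl(E_1\times E_2)\ra X$ for the quotient by the involution induced by $-\id$ and $b\colon\Bl(E_1\times E_2)\ra E_1\times E_2$ for the blow-down, so that $\pi=q\times\id_{E_t}$ and $\phi=b\times\id_{E_t}$. Since $\ord(t)=n>2$, the curve $\{t\}\times E_2$ avoids the sixteen two-torsion points of $E_1\times E_2$, so $E_t\subset X$ avoids the sixteen $(-2)$-curves over which $q$ is branched; hence $b$ is an isomorphism and $q$ is \'etale near the loci pulled back below, and each $\pi$-preimage can be read off directly as a $\phi$-pullback. One has $q^{-1}(E_t)=(\{t\}\times E_2)\sqcup(\{-t\}\times E_2)$, with $q$ restricting to the isomorphism $g\colon E_2\xrightarrow{\sim}E_t$ (normalized by $e_2\mapsto e$) on the first sheet and to $-g$ on the second. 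Pulling back the three summands of $[Z_t]$ accordingly yields
\begin{equation*}
	\pi^{\ast}[Z_t]=\phi^{\ast}\Big([t]\times[\Gamma_g]+[-t]\times[\Gamma_{-g}]-\big([t]+[-t]\big)\times\big([E_2\times e]+[e_2\times E_t]\big)\Big),
\end{equation*}
where $\Gamma_g,\Gamma_{-g}\subset E_2\times E_t$ are the graphs of $g$ and $-g$. By \eqref{decomp_graph} applied to these isomorphisms, $[\Gamma_g]-[E_2\times e]-[e_2\times E_t]=[\id]$ and $[\Gamma_{-g}]-[E_2\times e]-[e_2\times E_t]=-[\id]$ in $\ch^1(E_2\times E_t)$, so the right-hand side collapses to $\phi^{\ast}\big(([t]-[-t])\times[\id]\big)$. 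Finally $[t]+[-t]=2[e_1]$ in $\ch^1(E_1)$ (the standard relation $[P]+[-P]=2[e]$), so $[t]-[-t]=2([t]-[e_1])$ and $\pi^{\ast}[Z_t]=\phi^{\ast}[Z_t^{\prime}]$. That $[Z_t^{\prime}]$ is homologically trivial is clear; it is torsion by (b). The only real subtlety here is bookkeeping the two sheets $\{\pm t\}\times E_2$ together with the sign flip $g\leftrightarrow -g$ on the second one.

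\emph{Part (b).} Set $Y:=E_1\times E_2\times E_t$ and consider the homomorphism $\mu\colon\ch^1(E_1)_{\lhom}\ra\ch^2(Y)_{\lhom}$, $\mu(\alpha):=\alpha\times[\id]$, so that $[Z_t^{\prime}]=\mu\big(2([t]-[e_1])\big)$. Under the classical isomorphism $\ch^1(E_1)_{\lhom}\simeq J(E_1)$, the class $[t]-[e_1]$ has order $n$, hence $2([t]-[e_1])$ has order $n/\gcd(n,2)=d(n)$; thus it suffices to show that $\mu$ is injective on torsion, which gives $\ord([Z_t^{\prime}])=\ord\big(2([t]-[e_1])\big)=d(n)$. (A purely Chow-theoretic test is too weak: intersecting $\mu(\alpha)$ with $p_{23}^{\ast}[\id]$ and pushing to $E_1$ returns $-2\alpha$, because $[\id]^2=-2[\mathrm{pt}]$ in $\ch^2(E_2\times E_t)$; this loses a factor of $2$ exactly when $4\mid n$, which is why we pass to the intermediate Jacobian.) If $\alpha$ is torsion, so is $\mu(\alpha)\in\ch^2(Y)_{\lhom}$, so by \cite{CTSS83AJcodimtwo} it vanishes if and only if $\Phi_Y(\mu(\alpha))=0$; hence it is enough to prove $\Phi_Y\circ\mu\colon J(E_1)\ra J^3(Y)$ injective on torsion. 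By compatibility of the Abel--Jacobi map with the correspondence $\alpha\mapsto\alpha\times[\id]$, this composite is the map on intermediate Jacobians induced by the morphism of Hodge structures $H^1(E_1)\ra H^3(Y)$, $\xi\mapsto p_1^{\ast}\xi\cup p_{23}^{\ast}\eta$, where $p_1,p_{23}$ are the projections of $Y$ and $\eta\in H^1(E_2)\otimes H^1(E_t)$ is the cohomology class of $[\id]$ (its position in $H^1(E_2)\otimes H^1(E_t)$ is recorded after \eqref{prod_decomposition}). Its image lies in the K\"unneth summand $H^1(E_1)\otimes H^1(E_2)\otimes H^1(E_t)$, a direct summand of the torsion-free lattice $H^3(Y,\mbz)$, so on $m$-torsion the map is $\bar\xi\mapsto\bar\xi\otimes\bar\eta$. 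This is injective because $\eta$ is primitive in $H^1(E_2,\mbz)\otimes H^1(E_t,\mbz)$: identifying this lattice with $\Hom_{\mbz}\big(H^1(E_2,\mbz),H^1(E_t,\mbz)\big)$ via the unimodular intersection form on $E_2$, the vector $\eta$ becomes, up to sign, the lattice isomorphism induced by $g$, i.e.\ an element of $\mathrm{GL}_2(\mbz)$, which has determinant $\pm1$ and hence is primitive in $\mbz^{4}$. The decisive step is this reduction to the intermediate Jacobian through \cite{CTSS83AJcodimtwo}; once it is in place the required cohomological injectivity is immediate.
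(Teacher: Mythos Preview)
Your proof is correct and follows essentially the same approach as the paper: in (a) you track the two sheets $\{\pm t\}\times E_2$ and use the diagonal/antidiagonal decomposition exactly as the paper does, and in (b) you reduce via \cite{CTSS83AJcodimtwo} to the primitivity of the cohomology class $[\id]$ in $H^1(E_2,\mbz)\otimes H^1(E_t,\mbz)$. The only cosmetic difference is that you certify primitivity by identifying $[\id]$ with an element of $\mathrm{GL}_2(\mbz)$, whereas the paper uses the self-intersection $[\id]^2=-2$; both arguments yield the same conclusion.
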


\begin{proof}
	\noindent\textbf{(a)} We identify $E_2 \simeq E_t$. Let  $\Delta \subset E_2 \times E_t$ be the diagonal and $\Delta^{-} \coloneqq \{ (x,-x) \colon x \in E_{2} \}$ be the antidiagonal.  Then it follows from \eqref{diag_decomp} that 
	\begin{align*}
	\ \	\pi^{\ast}[Z_{t}] &= 	\pi^{\ast}([\Delta_{E_t}] - [E_{2} \times e] - [x_0 \times E_t]) \\
		&=\phi^{\ast} \big([t] \times ([\Delta]- [E_{2} \times e] - [e_2 \times  E_t])  +   [-t] \times ([\Delta^{-}]- [E_{2} \times e] - [e_2 \times  E_t])\big) \\
		&\overset{(\ast)}{=} \phi^{\ast} \big([t] \times [\id] \big) -  \phi^{\ast} \big( (2[e_1] - [t]) \times [\id] \big) \\
		&=  \phi^{\ast} [Z_t^{\prime}] \in \ch^2(\Bl(E_1 \times E_2) \times E_t).
	\end{align*} 
	Here, for the equality ($\ast$), we use the relations  $[\id] = - [-\id] \in \Hom(J(E_1),J(E_2))$, and  $[t] + [-t] = 2[e_1] \in \ch^1(E_1)$.

	\noindent\textbf{(b)} 
	By the injectivity of Abel--Jacobi maps on torsion  classes of codimension two (\cite{CTSS83AJcodimtwo}),  it is equivalent to show that the image of $[Z_t^{\prime}]$ under the Abel--Jacobi map \begin{equation*}
		\Phi \colon \ch^2(E_1 \times E_2 \times E_t)_{\lhom} \ra J^3(E_1 \times E_2 \times E_t) = \frac{F^2 H^3(E_1 \times E_2 \times E_t)^{\ast}}{H_3(E_1 \times E_2 \times E_t, \mbz)}
	\end{equation*}
	has order $d(n)$. We   compute the image $\Phi([Z_t^{\prime}]) = \Phi(2([t]-[e]) \times [\id]) $ as follows:
	
	By construction, $t \in E_1$ is a torsion point of order $n$, hence the class $ 2([t]-[e_1]) \in \ch^1(E_1)$  is  torsion of order $d(n)$, where $d(n) = n$ for odd $n$, and $n/2$ for even $n$. Since the Abel--Jacobi map $\Phi_{E_1} \colon  \ch^1(E_1)_{\lhom} \ra J(E_1)  = \frac{H^0(E_1,\omega_{E_{1}})^{\ast}}{H_1(E_1,\mbz)}$ is an isomorphism,  there exists a non-divisible one-cycle $\gamma_t \in H_1(E_1,\mbz)$ such that
 	$\Phi_{E_1}(2([t]-[e_1])) = \frac{1}{d(n)}\int_{\gamma_t} \in J(E_1)$. Then it follows from the definition of $\Phi$ that
 	$\Phi([Z_{t}^{\prime}]) =  \frac{1}{d(n)} \int_{\gamma_t \times [\id]}
 	\in  J^3(E_{1} \times E_{2} \times E_t)$.
 	
 	Now note that the class  $[\id] \in H_2(E_2 \times E_t,\mbz)$ has self intersection $-2$, hence is also not divisible, i.e.\  $\frac{1}{m} [\id] \notin H_2(E_2 \times E_t, \mbz)$ for any integer $m > 1$. As a result, the (topological) $3$-cycle $\gamma_t \times [\id] \in H_3(E_{1} \times E_{2} \times E_t, \mbz)$ is not divisible by any integer $1 < m \mid d(n)$ as well. This implies that $\Phi([Z_{t}^{\prime}])$
	is a torsion class of order $d(n)$ and we conclude that $\ord([Z_t^{\prime}]) = d(n)$.
\end{proof}

\begin{rmk}
	 \Cref{order_CycleInProduct}  also implies that the modified diagonal $[Z_t] \in \ch^2(X \times E_t)$ is   a torsion class with $d(n) \mid \ord([Z_t])$: Note that $\phi^{\ast}$ is injective  (see i.e.\ \cite[Prop.~6.7(e)]{fulton1998intersection}), and $\ker(\pi^{\ast})$ is a two-torsion subgroup. However,  it is not clear whether the equality holds due to the nontriviality of $\ker(\pi^{\ast})$: It contains  non-trivial two-torsion classes of the form $[\bar{E_i}] \otimes E_t[2]$, where $\bar{E_i} \subset X$ is one of the 16 rational curves from the blow-up.  
\end{rmk}
\begin{rmk}
	When $4 \nmid n$, one can  prove that the order of $[Z_{t}^{\prime}] \in \ch^2(E_1 \times E_2 \times E_t)$ is exactly $d(n)$  without using the Abel--Jacobi map and the deep injectivity result in \cite{CTSS83AJcodimtwo} as follows. 
	 For any $1 \leqslant d_1 < d(n)$ with $d_1 \mid d(n)$,  since the order of the torsion class $2d_1([t]-[e_1]) \in \ch^1(E_1)$ is $\frac{d(n)}{d_1} \neq 2$ (note that $2 \nmid d(n)$ when $4 \nmid n$), the class  $d_1[Z_{t}^{\prime}]$ induces a nontrivial morphism
	\begin{equation*}
		d_1[Z_{t}^{\prime}]_{\ast} \colon \ch^1(E_2 \times E_t) \ra \ch^1(E_1), \ \ [\id] \mapsto -4d_1([t]-[e_1]) \neq 0,
	\end{equation*}
	hence $d_1[Z_t^{\prime}] \neq 0$, while by construction $d(n)[Z_{t}^{\prime}] = 0$. This proves $\ord([Z_t^{\prime}]) = d(n)$ for $4 \nmid n$.
	
	For $n = 4k$, the above trick fails, because $[kZ_{t}^{\prime}]_{\ast}\big([\id]\big) = -n([t] - [e_1]) = 0$. 
\end{rmk}

\subsection{Passing to $E_1 \times E_2 \times E_t$} \label{sec_pass_tri_prod}
The aim of this section is to reduce the computation of $\ord(E_t)$ on $X \times E_t$ to the abelian threefold $E_1 \times E_2 \times E_t$. To reduce  notations, we write $A \coloneqq E_1 \times E_2$. 

Let $[Z_t] \in \ch^2(X \times E_t)_{\lhom}$ and $ [Z_t^{\prime}] \in \ch^2(A \times E_t)_{\hom}$
be the torsion classes in  \Cref{order_CycleInProduct} that pull back to the same class in $
\ch^2(\textup{Bl}(A) \times E_t)_{hom}$. Define $\kappa_t, \kappa_t^{\prime}$ as their images under the base change to the function field $k(E_t)$:
\begin{align} \label{def_kappa_prime}
	\ch^2(X \times E_t)_{\lhom} \ra \ch^2(X \times k(E_t))_{0}, \ \ [Z_t] \mapsto \kappa_t \\ 
	\ch^2(A \times E_t)_{\lhom} \ra \ch^2(A \times k(E_t))_{0}, \ \ [Z_t^{\prime}] \mapsto \kappa_t^{\prime}.\nonumber
\end{align} 
Recall from \Cref{sec_review} that $\kappa_t$ is torsion  and $\ord(E_t)  = \ord(\kappa_t) \in \ch^2(X \times k(E_t))$. In addition, since $ \ord([Z_t^{\prime}]) = d(n)$ (see \Cref{order_CycleInProduct}),  we have $d(n)\kappa_t^{\prime} = 0$. The two torsion classes $\kappa_t, \kappa_t^{\prime}$ are linked by the following diagram induced from \eqref{Kummer_diagram} (see \Cref{sec_strategy} \eqref{diag_algtrIJ} and \Cref{tr_AJ_absurface} for the construction of the vertical maps):
\begin{equation}  \label{diagram_AJmap}
	\begin{tikzcd}
		\ch^2(X \times k(E_t))_{0} \arrow[d,"\Phi_X^{\mathrm{tr}}"] \arrow[r, "\pi^{\ast}_{\eta}"] 
		& \ch^2 (\Bl(A)\times k(E_t))_{0} \arrow[d,"\Phi_{\Bl}^{\mathrm{tr}}"] 
		& \ch^2 (A \times k(E_t))_{0} \arrow[l,"\phi^{\ast}_{\eta}",swap] \arrow[d,"\Phi_{A}^{\mathrm{tr}}"] \\
		J_{\mathrm{tr}}^3(X \times E_t) \arrow[r,"\pi^{\ast}_{\mathrm{tr}}"]                     & J_{\mathrm{tr}}^3 (\Bl(A)\times E_t)                   
		& J_{\mathrm{tr}}^3 (A \times E_t) \arrow[l,"\phi^{\ast}_{\mathrm{tr}}",swap]    	\end{tikzcd}.
\end{equation}
By construction,  $\pi_{\eta}^{\ast}(\kappa_t) = \phi_{\eta}^{\ast}(\kappa_t^{\prime})$.  We have the following useful observation:
\begin{lem} \label{trAJ_pullback}
	The map $\pi_{\mathrm{tr}}^{\ast}$ is injective and $\phi_{\mathrm{tr}}^{\ast}$ is an isomorphism.
\end{lem}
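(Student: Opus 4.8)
The plan is to analyze the two maps separately, using the geometry of the diagram \eqref{Kummer_diagram}, namely that $\pi \colon \Bl(A) \times E_t \to X \times E_t$ is a double cover (the quotient by the involution $\iota = [-1] \times \id$ on $\Bl(A) \times E_t$, where $[-1]$ acts on $\Bl(A)$) and $\phi \colon \Bl(A) \times E_t \to A \times E_t$ is the blow-down of the sixteen exceptional divisors crossed with $E_t$. On cohomology, $\pi^{\ast}$ identifies $H^{\ast}(X)$ with the $\iota$-invariant part of $H^{\ast}(\Bl(A))$, and since $H^1(A) = H^1(E_1) \oplus H^1(E_2)$ is anti-invariant under $[-1]$, the transcendental lattice $T'(X)$ of $X$ corresponds, up to finite index, to the $[-1]$-invariant part of $T'(\Bl(A))$; the anti-invariant part of the relevant Hodge structure $T'(\Bl(A)) \otimes H^1(E_t)$ is exactly what gets killed. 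The key point is that on the level of the transcendental intermediate Jacobians $J_{\mathrm{tr}}^3(- \times E_t)$, which are complex tori, $\pi^{\ast}_{\mathrm{tr}}$ and $\phi^{\ast}_{\mathrm{tr}}$ are induced by morphisms of the underlying Hodge structures, so injectivity and surjectivity can be checked rationally, i.e.\ after tensoring with $\mbq$, where the relevant statements become statements about Hodge substructures.

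First I would treat $\phi^{\ast}_{\mathrm{tr}}$. The blow-down $\phi$ is birational, so $\phi^{\ast} \colon H^{\ast}(A \times E_t, \mbq) \to H^{\ast}(\Bl(A) \times E_t, \mbq)$ is injective with image a Hodge substructure, and $H^{\ast}(\Bl(A) \times E_t, \mbq)$ decomposes as $\phi^{\ast}H^{\ast}(A \times E_t, \mbq)$ plus the contribution of the exceptional divisors, which is a sum of Tate twists and hence entirely algebraic — in particular it contributes nothing to the transcendental part. Therefore $\phi^{\ast}$ induces an isomorphism $T'(A) \xrightarrow{\sim} T'(\Bl(A))$ of rational (even integral, since the exceptional classes are primitive) Hodge structures, hence an isomorphism $T'(A) \otimes H^1(E_t) \xrightarrow{\sim} T'(\Bl(A)) \otimes H^1(E_t)$ of the Hodge structures defining the transcendental intermediate Jacobians. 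Taking the associated complex tori $J_{\mathrm{tr}}^3$ as in \eqref{def_IJ_tr}, this gives that $\phi^{\ast}_{\mathrm{tr}}$ is an isomorphism. (One should be slightly careful that the lattices match integrally, not just rationally; this follows because the exceptional divisor classes form part of an integral basis complementary to $\phi^{\ast}H^2(A,\mbz)$ inside $H^2(\Bl(A),\mbz)$, so $T'(\Bl(A)) = H^2(\Bl(A),\mbz)/\ns$ is carried isomorphically onto $T'(A)$ by $\phi^{\ast}$.)

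Next I would treat $\pi^{\ast}_{\mathrm{tr}}$. Here $\pi$ is a generically finite degree-two cover, so $\pi_{\ast} \circ \pi^{\ast} = 2 \cdot \id$ on $H^{\ast}(X \times E_t, \mbq)$, which already shows that $\pi^{\ast}$ is injective on rational cohomology, hence injective on $T'(X) \otimes H^1(E_t)$ rationally and thus injective on its real points. Since $J_{\mathrm{tr}}^3(X \times E_t)$ is the quotient of the real vector space $F^2(T'(X) \otimes H^1(E_t))^{\ast}$ by the lattice $T'(X) \otimes H^1(E_t, \mbz)$, and $\pi^{\ast}_{\mathrm{tr}}$ is induced by a map of such data that is injective on the ambient real vector spaces and maps the lattice into the lattice, the induced map on tori has finite kernel; to upgrade this to injectivity one uses that $\pi^{\ast}$ is injective on the integral lattice $T'(X) \otimes H^1(E_t, \mbz)$ with \emph{saturated} (primitively embedded) image — equivalently, that a class in $H^{\ast}(\Bl(A) \times E_t, \mbz)$ which is $\iota$-invariant and lies in $\pi^{\ast}H^{\ast}(X \times E_t, \mbq)$ already lies in $\pi^{\ast}H^{\ast}(X \times E_t, \mbz)$, which holds because $\pi^{\ast}H^{\ast}(X,\mbz)$ is precisely the $\iota$-invariant sublattice of $H^{\ast}(\Bl(A),\mbz)$ (a standard fact for the Kummer construction, the point being that $H^{\ast}(X,\mbz)$ has no $2$-torsion obstruction in the relevant degrees). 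Then $\pi^{\ast}_{\mathrm{tr}}$ is injective on the lattice with torsion-free cokernel, which forces the induced map on complex tori to be injective.

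The main obstacle I expect is the integral/torsion bookkeeping in the $\pi^{\ast}_{\mathrm{tr}}$ statement: rationally everything is immediate from the transfer argument $\pi_{\ast}\pi^{\ast} = 2$, but injectivity of $\pi^{\ast}_{\mathrm{tr}}$ \emph{on the nose} (rather than up to $2$-torsion) is what the later argument needs — indeed \Cref{order_kappa_same} will compare $\ord(\kappa_t)$ with $\ord(\kappa_t^{\prime})$ and a factor of $2$ ambiguity would be fatal exactly in the even-$n$ case. So the careful point is to verify that $\pi^{\ast}$ embeds the transcendental lattice of $X$ primitively into that of $\Bl(A)$, i.e.\ that there is no hidden $2$-divisibility, which one extracts from the classical description of $H^2(\mathrm{Kum}(A),\mbz)$ as the $\iota$-invariants of $H^2(\Bl(A),\mbz)$. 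Everything else — the birational invariance of the transcendental part under $\phi$, and the passage from Hodge structures to intermediate Jacobians — is routine.
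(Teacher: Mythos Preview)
Your treatment of $\phi_{\mathrm{tr}}^{\ast}$ is essentially the same as the paper's (which dispatches it in one line), though you should note that for the abelian surface $A$ the torus $J_{\mathrm{tr}}^3(A \times E_t)$ defined in \Cref{tr_AJ_absurface} has extra K\"unneth pieces coming from $H^1(A)$ and $H^3(A)$; these are untouched by the blow-up, so the conclusion survives.

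For $\pi_{\mathrm{tr}}^{\ast}$ there is a genuine gap. Your key claim, that $\pi^{\ast}H^2(X,\mbz)$ coincides with the $\iota$-invariant sublattice of $H^2(\Bl(A),\mbz)$, is \emph{false}: the involution $\iota = [-1]$ acts trivially on $H^2(A,\mbz) = \wedge^2 H^1(A,\mbz)$ and fixes each exceptional curve, so it acts trivially on all of $H^2(\Bl(A),\mbz)$, whereas $\pi^{\ast}H^2(X,\mbz)$ sits inside with index $2^{11}$. Thus the saturation argument you sketch does not go through, and this is precisely the $2$-torsion ambiguity you yourself flag as fatal.

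What actually makes $\pi^{\ast} \colon T'(X) \to T'(\Bl(A))$ an isomorphism is that the cokernel of $\pi^{\ast}$ on $H^2(\cdot,\mbz)$ is generated by the exceptional classes $[E_i]$ and hence dies in $T'(\Bl(A))$; this is the Kummer-lattice computation $\frac{H^2(\Bl(A),\mbz)}{\pi^{\ast}H^2(X,\mbz)} \simeq \frac{\oplus \mbz[E_i]}{\pi^{\ast}(K)}$ carried out in the paper. The paper packages the same input via the snake lemma applied to the extension $0 \to J_{\mathrm{alg}}^3 \to J^3 \to J_{\mathrm{tr}}^3 \to 0$, showing that $\ker(\pi^{\ast})$ on $J^3$ lies entirely in $J_{\mathrm{alg}}^3$. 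Your direct lattice approach would work too, but only once you replace the incorrect $\iota$-invariants claim with this Kummer-lattice input.
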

\begin{proof}
	Since the blow-up  $\Bl(A) \ra A$ preserves the transcendental lattice, the pullback $\phi_{\mathrm{tr}}^{\ast}$ is an isomorphism. Now we show that $\pi_{\mathrm{tr}}^{\ast}$ is injective.
	
	First,  observe that the two pullback morphisms  $\pi^{\ast} \colon J^3(X \times E_t) \ra J^3(\Bl(A) \times E_t)$ and  $\pi_{\mathrm{tr}}^{\ast} \colon J_{\mathrm{tr}}^3(X \times E_t) \ra J_{\mathrm{tr}}^3(\Bl(A) \times E_t)$ induced by the quotient $\pi \colon \Bl(A) \ra X$ have images
	\begin{align*}
	J^{2,1}(\Bl(A) \times E_t)  &\coloneqq  \frac{F^2(H^2(\Bl(A)) \otimes H^1(E_t))^{\ast}}{H^2(\Bl(A),\mbz) \otimes H^1(E_t,\mbz)} \subset J^3(\Bl(A) \times E_t),\\
	J_{\mathrm{tr}}^{2,1}(\Bl(A) \times E_t) &\coloneqq \frac{F^2(T'(A) \otimes H^1(E_t))^{\ast}}{T'(A) \otimes H^1(E_t,\mbz)} \subset J_{\mathrm{tr}}^3(\Bl(A) \times E_t) \ \ \textup{with}  \ \ T'(A) \coloneqq \frac{H^2(S,\mbz)}{\ns(A)},
	\end{align*}
	respectively. Then from \Cref{sec_strategy}\eqref{ext_IJ} we obtain the following pullback diagram of extensions of intermediate Jacobians:
	\begin{equation}  
		\begin{tikzcd}[sep=1.8em, font=\small]
			0 \arrow[r] &J_{\mathrm{alg}}^3(X \times E_t) \arrow[d,"\pi_{\mathrm{alg}}^{\ast}"] \arrow[r] 
			& J^3(X \times E_t) \arrow[r] \arrow[d,"\pi^{\ast}"] 
			& J_{\mathrm{tr}}^3(X \times E_t)  \arrow[d,"\pi_{\mathrm{tr}}^{\ast}"] \arrow[r] &0 \\
			0 \arrow[r] &J_{\mathrm{alg}}^3(\Bl(A) \times E_t)  \arrow[r]     & J^{2,1}(\Bl(A) \times E_t) \arrow[r]                
			& J_{\mathrm{tr}}^{2,1}(\Bl(A) \times E_t)   \arrow[r] &0  	\end{tikzcd}.
	\end{equation} 
	By the snake lemma, to see that $\pi_{\mathrm{tr}}^{\ast}$ is injective, it is equivalent to show  that  $\ker(\pi_{\mathrm{alg}}^{\ast}) \ra \ker(\pi^{\ast})$  and $\pi_{\mathrm{alg}}^{\ast}$ are both surjective. 
	
	Recall from \Cref{sec_strategy}\eqref{diag_algtrIJ} that for the two surfaces $X$ and $\Bl(A)$, we have isomorphisms  $ J_{\mathrm{alg}}^3(\cdot \times E_t) \simeq \ns(\cdot) \otimes   \ch^1(E_t)_{\lhom}$, and  under this identification $\pi_{\mathrm{alg}}^{\ast} = \pi_{NS}^{\ast} \otimes \id$, where $\pi_{NS}^{\ast} \colon \ns(X) \ra \ns(A)$ is the pullback.   Since $\ch^1(E_t)_{\lhom}$ is divisible,  for any $\theta \in \ch^1(E_t)_{\lhom}$, there exists $\alpha \in \ch^1(E_t)_{\lhom}$ such that $\theta = 2 \alpha$. Therefore, for any  $D \in \ns(\Bl(A))$, we have
	$D \otimes \theta = 	 2D \otimes \alpha =  \pi^{\ast}(\pi_{\ast}(D)) \otimes \alpha = \pi_{\mathrm{alg}}^{\ast}(D \otimes \alpha) \in \textup{im}(\pi_{\mathrm{alg}}^{\ast}) ,
	$
	which implies that $\pi_{\mathrm{alg}}^{\ast}$ is surjective.
	
	 Now it remains to prove the surjectivity of $\ker(\pi_{\mathrm{alg}}^{\ast}) \ra \ker(\pi^{\ast})$. By construction, we have  $\ker(\pi^{\ast}) \simeq \frac{H^2(\Bl(A),\mbz)}{\pi^{\ast}H^2(X,\mbz)} \otimes H^1(E_t,\mbz)$. Denote by $E_i \subset \Bl(A)$  the 16 exceptional curves and write $\bar{E_i} \subset X$ for the image of $E_i$. Let $K \subset H^2(X,\mbz)$ be the Kummer lattice, i.e.\ the minimal primitive sublattice that contains  $\oplus_i \mbz [\bar{E_i}]$. Recall that $K$ is of rank $16$ with  discriminant $2^6$ (see e.g. \cite{BHPV2003-cptsurface}). Since the pullback $\pi^{\ast} \colon H^2(X,\mbz) \ra H^2(\Bl(A),\mbz)$ is an injective map of free $\mbz$-modules of the same rank,  one can view $\pi^{\ast}H^2(X,\mbz) \subset H^2(\Bl(A),\mbz)$ and $\pi^{\ast}(K) \subset \oplus \mbz [E_i]$ as sublattices and check that both embeddings are of index $2^{11}$. Also note that  by definition $K = (\pi^{\ast})^{-1}(\oplus \mbz [E_i])$. Therefore, the two embeddings of lattices induce a natural isomorphism
	$\frac{\oplus \mbz [E_i]}{\pi^{\ast}(K)} \simeq \frac{H^2(\Bl(A),\mbz)}{\pi^{\ast}H^2(X,\mbz)},
$
	and hence $$\ker(\pi^{\ast}) \simeq 	\frac{\oplus \mbz [E_i]}{\pi^{\ast}(K)} \otimes H^1(E_t,\mbz) \simeq \frac{\oplus \frac{1}{2} \mbz [\bar{E_i}]}{K} \otimes H^1(E_t,\mbz) \subset J^3(X \times E_t)[2].
	$$
	Let $\Phi_X$ be the Abel--Jacobi map on $\ch^2(X \times C)_{\lhom}$. Then $\ker(\pi^{\ast}) \subset \Phi_X(\oplus_i \mbz [\bar{E_i}] \otimes J(E_t)[2])$ is contained in $J_{\mathrm{alg}}^3(X \times E_t)$ and   $\pi_{\mathrm{alg}}^{\ast}(\ker(\pi^{\ast})) = 0$. 
	This concludes the proof.
\end{proof}
 
  The following result says that the order of the constant cycle curve $E_t$ can be computed as the order of the torsion class $\kappa_t^{\prime} \in \ch^2(A \times k(E_t))$.
\begin{prop} \label{order_kappa_same} 
 Let $t \in E_1$ be a torsion point of order $n>2$. Then for the two torsion classes $\kappa_t \in \ch^2(X \times k(E_t))$ and $\kappa_t^{\prime} \in \ch^2(A \times k(E_t))$ in  \eqref{def_kappa_prime}, we have  $$\ord(E_t) = \ord(\kappa_t) =  \ord(\kappa_t^{\prime}) \mid d(n).$$
\end{prop}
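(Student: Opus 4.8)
The plan is to package the three torsion classes $\kappa_t$, $\pi_\eta^*(\kappa_t) = \phi_\eta^*(\kappa_t')$ and $\kappa_t'$ together with their images under the transcendental Abel--Jacobi maps in the diagram \eqref{diagram_AJmap}, and to read off the equality of orders from the injectivity/isomorphism properties of the maps involved. First I would record that by \Cref{inj_tr_AJ} (and its extension in \Cref{tr_AJ_absurface} to the abelian surface $A$), all three vertical maps $\Phi_X^{\mathrm{tr}}$, $\Phi_{\Bl}^{\mathrm{tr}}$, $\Phi_A^{\mathrm{tr}}$ are injective on torsion subgroups; in particular, since $\kappa_t$, $\kappa_t'$ and $\pi_\eta^*(\kappa_t)$ are all torsion, their orders are computed by their images in the respective transcendental intermediate Jacobians, i.e.\ $\ord(\kappa_t) = \ord(\Phi_X^{\mathrm{tr}}(\kappa_t))$ and similarly for the other two. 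This reduces the whole statement to tracking orders through the bottom row of \eqref{diagram_AJmap}.

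Next I would use \Cref{trAJ_pullback}: $\phi_{\mathrm{tr}}^*$ is an isomorphism and $\pi_{\mathrm{tr}}^*$ is injective. Chasing $\kappa_t'$ through the diagram, $\Phi_{\Bl}^{\mathrm{tr}}(\phi_\eta^*\kappa_t') = \phi_{\mathrm{tr}}^*(\Phi_A^{\mathrm{tr}}(\kappa_t'))$ has the same order as $\Phi_A^{\mathrm{tr}}(\kappa_t')$ because $\phi_{\mathrm{tr}}^*$ is an isomorphism; and since $\pi_\eta^*(\kappa_t) = \phi_\eta^*(\kappa_t')$, the class $\Phi_{\Bl}^{\mathrm{tr}}(\pi_\eta^*\kappa_t) = \pi_{\mathrm{tr}}^*(\Phi_X^{\mathrm{tr}}(\kappa_t))$ equals that same element, whence $\ord(\kappa_t') = \ord(\pi_\eta^*\kappa_t)$. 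Finally $\pi_{\mathrm{tr}}^*$ injective gives $\ord(\pi_{\mathrm{tr}}^*(\Phi_X^{\mathrm{tr}}\kappa_t)) = \ord(\Phi_X^{\mathrm{tr}}\kappa_t) = \ord(\kappa_t)$. Combining these, $\ord(\kappa_t) = \ord(\kappa_t')$. The chain $\ord(E_t) = \ord(\kappa_t)$ is just \Cref{def_ord_kappa} applied to the compactification $[Z_t]$ of the canonical class $\kappa_{E_t}$ — here I would note that $[Z_t]$ and the standard modified diagonal $[\Delta_{E_t}] - [E_t\times y] - c_X\times[E_t]$ differ by a class coming from $\ch^1(X)\otimes\ch^1(E_t)_{\lhom}$ (the difference $[x_0\times E_t] - [e_?\times E_t]$ type terms), so they have the same image $\kappa_t$ in $\ch^2(X\times k(E_t))_0$, hence $\ord(\kappa_t) = \ord(E_t)$ by \Cref{ord_comp_prac}. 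The divisibility $\ord(\kappa_t')\mid d(n)$ is immediate from $d(n)\,\kappa_t' = 0$, which follows from $\ord([Z_t']) = d(n)$ in \Cref{order_CycleInProduct}(b) and the fact that $\kappa_t'$ is the image of $[Z_t']$ under base change.

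I expect the main obstacle to be purely bookkeeping rather than conceptual: making sure the diagram \eqref{diagram_AJmap} genuinely commutes with the transcendental Abel--Jacobi maps as vertical arrows — i.e.\ that $\Phi^{\mathrm{tr}}$ is functorial for the pullbacks $\pi^*$, $\phi^*$ in the sense that $\Phi_{\Bl}^{\mathrm{tr}}\circ\pi_\eta^* = \pi_{\mathrm{tr}}^*\circ\Phi_X^{\mathrm{tr}}$ and likewise for $\phi$. This requires checking that the pullback of cycles, the pullback on intermediate Jacobians, and the identification of the transcendental quotient are all compatible, and in particular that $\pi^*$ and $\phi^*$ respect the algebraic/transcendental decomposition of $H^3$ (for $\phi$ this is because $\Bl(A)\to A$ preserves the transcendental lattice, already used in \Cref{trAJ_pullback}; for $\pi$ one uses that $\pi^*H^2(X,\mbz)$ contains the relevant transcendental part). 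A secondary, minor point to get right is the precise relation between $[Z_t]$ in \eqref{eq_def_Zt} and the canonical compactification in \Cref{sum_strategy}, so that $\ord(\kappa_t)$ really is $\ord(E_t)$ and not off by the contribution of $[x_0\times E_t]$ versus $c_X\times[E_t]$ — this is handled by $[x_0] = c_X \in \ch_0(X)$ since $x_0\in E_t$.
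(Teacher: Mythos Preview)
Your proposal is correct and follows essentially the same route as the paper: use the commutative diagram \eqref{diagram_AJmap}, the injectivity of the transcendental Abel--Jacobi maps on torsion (\Cref{inj_tr_AJ} and \Cref{tr_AJ_absurface}), and the injectivity/isomorphism of $\pi_{\mathrm{tr}}^{\ast}$, $\phi_{\mathrm{tr}}^{\ast}$ from \Cref{trAJ_pullback} to conclude $\ord(\kappa_t) = \ord(\kappa_t')$, with $\ord(\kappa_t') \mid d(n)$ coming from \Cref{order_CycleInProduct}. The bookkeeping concerns you raise (commutativity of the diagram, the identity $\ord(E_t) = \ord(\kappa_t)$) are handled in the paper exactly as you anticipate --- the latter is simply the definition \Cref{def_ord_kappa}, since $[x_0] = c_X$ makes $[Z_t]$ a genuine lift of $\kappa_{E_t}$.
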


\begin{proof}
	By construction (cf.\ \eqref{def_kappa_prime}), we have $d(n)\kappa_t^{\prime} = 0$ (see \Cref{order_CycleInProduct}). We use the diagram \eqref{diagram_AJmap} to compare the order of $\kappa_t$ and $\kappa_t^{\prime}$. Recall from   \Cref{inj_tr_AJ} and \Cref{tr_AJ_absurface} that $\Phi_X^{\mathrm{tr}}, \Phi_{Bl}^{\mathrm{tr}}$ and $\Phi_{A}^{\mathrm{tr}}$ are all injective on the torsion subgroups. In addition, by \Cref{trAJ_pullback}, $\pi_{\mathrm{tr}}^{\ast}$ is injective and $\phi_{\mathrm{tr}}^{\ast}$ is an isomorphism. Therefore, we conclude that $\ord(\kappa_t) = \ord(\kappa_t^{\prime}) \mid d(n)$. 
\end{proof}

\section{The order of $E_{t}$} \label{sec_ord_kprime}
Our final goal is to determine the order of the elliptic constant cycle curve $E_t$ on the Kummer surface $X = \textup{Kum}(E_1 \times E_2)$ for any torsion point $t \in E_1$ of order $n > 2$ (see \Cref{order_factor}(3)). 

In \Cref{sec_order_Zt}, we constructed a $d(n)$-torsion class $\kappa_t^{\prime} \in \ch^2(E_1 \times E_2 \times k(E_t))$ (see \eqref{def_kappa_prime}) and proved that $\ord(E_t) = \ord(\kappa_t^{\prime})$ (see \Cref{order_kappa_same}). The aim of this section is to show that  $\ord(\kappa_t^{\prime}) = d(n)$ under mild assumptions on the triple $(E_1,E_2,n)$, which completes the proof of our main result \Cref{ccc_Kummer_ord}, namely  $\ord(E_t) = d(n)$ holds in the generic case. 

The idea to prove $\ord(\kappa_t^{\prime}) = d(n)$ is as follows. Since   $\kappa_t^{\prime}$ is the restriction of the torsion class $[Z_t^{\prime}]  = 2([t] - [e_1]) \times [\id] \in \ch^2(E_1 \times E_2 \times E_t)$ (cf.\ \eqref{def_kappa_prime}), by \Cref{lem_bloch},  it suffices to show that the minimal positive integer $N$ such that  $N[Z_t^{\prime}]$ comes from  $\ch^1(E_1 \times E_2) \otimes \ch^1(E_t)$ is $d(n)$.  
When $E_1 \nsim E_2$,  the minimality of $d(n)$ follows immediately from a detailed study of product cycles in $\ch^2(E_1 \times E_2 \times E_t)$, see \Cref{sec_decomp_triple_elliptic} and \Cref{sec_case1}.  
The case $E_1 \sim E_2$ is more complicated. Nonetheless, in the general situation, i.e.\ $E_1 \sim E_2$ with no CM, one can  still verify that $\ord(\kappa_t^{\prime})$ does not degenerate by studying the image of $[Z_t^{\prime}]$ in the intermediate Jacobian $J^3(E_1 \times E_2 \times E_t)$, see \Cref{sec_case2}.

\subsection{One-cycles on triple products of elliptic curves} \label{sec_decomp_triple_elliptic}
In this subsection, we work in a slightly more general setting.
 Let $E_1, E_2,E_3$ be three elliptic curves. Our goal is to understand the structure of the subgroup of  $\ch^2(E_1 \times E_2 \times E_3)$ generated by product cycle classes, namely classes in 
$G_{ij} \coloneqq \textup{im}(\ch^1(E_i \times E_j) \otimes \ch^1(E_k) \ra \ch^2(E_1 \times E_2 \times E_3))$ for  $\{i,j,k\} = \{1,2,3\}$. Due to  symmetry, it is enough to study the two groups $G_{12}$, $G_{23}$ and their intersection. 

By the decomposition of $\ch^1(E_i \times E_j)$ (see \Cref{subsec_decomp_cycle}),
one can decompose $G_{12}$ and $G_{23}$ further as
\begin{equation} \label{eq_Gij_decomp}
	G_{12} = G^{0,2,2} + G^{1,1,2} + G^{2,0,2} , \ \  
	G_{23} = G^{2,0,2}+G^{2,1,1}+G^{2,2,0},
\end{equation}
where 
\begin{align} \label{eq_Gpqr}
	G^{0,2,2} &\coloneqq \textup{im}\big( [E_1] \otimes \ch^1(E_2) \otimes \ch^1(E_3)  \ra \ch^2(E_1 \times E_2 \times E_3)\big) \nonumber \\
	G^{1,1,2} &\coloneqq \textup{im}\big( \textup{Hom}(J(E_1),J(E_2)) \otimes \ch^1(E_3)  \ra \ch^2(E_1 \times E_2 \times E_3)\big) \nonumber \\
	G^{2,0,2} &\coloneqq \textup{im}\big(\ch^1(E_1) \otimes [E_2] \otimes \ch^1(E_3)  \ra \ch^2(E_1 \times E_2 \times E_3)\big) \\
	G^{2,1,1} &\coloneqq \textup{im}\big( \ch^1(E_1) \otimes \textup{Hom}(J(E_2),J(E_3))    \ra \ch^2(E_1 \times E_2 \times E_3)\big) \nonumber \\
	G^{2,2,0} &\coloneqq \textup{im}\big(\ch^1(E_1) \otimes  \ch^1(E_2) \otimes [E_3]  \ra \ch^2(E_1 \times E_2 \times E_3)\big), \nonumber 
\end{align}
and the superscript $(p,q,r)$ indicates that the corresponding cohomology classes are of the type $H^p(E_1) \otimes H^q(E_2) \otimes H^q(E_3)$. 

The following lemma describes the  structure of the subgroup $G_{12}+G_{23} \subset \ch^1(E_1 \times E_2 \times E_3)$. Note  that $G^{2,2,0} \cap G_{12} = 0$ and $G^{2,0,2} \subset G_{12} \cap G_{23}$, hence the only interesting part of the intersection $G_{12} \cap G_{23}$ is contained in $G^{2,1,1}$.

\begin{lem} \label{chow_tripleprod}
	Let $E_1, E_2, E_3$ be three elliptic curves. The  subgroups  of $\ch^2(E_1 \times E_2 \times E_3)$ in \eqref{eq_Gij_decomp} and  \eqref{eq_Gpqr} satisfy:
	\begin{enumerate}[\normalfont(a)]
		\item $G_{12} =  G^{0,2,2} \oplus G^{1,1,2}\oplus G^{2,0,2} $.
		\item $G_{12} \cap G^{2,1,1}  \subset G^{1,1,2}$. 
		In particular, if  $E_1 \nsim E_2$, then $G_{12} \cap G^{2,1,1} = 0$.
	\end{enumerate}
\end{lem}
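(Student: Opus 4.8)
The plan is to handle both parts by locating the relevant subgroups of $\ch^2(E_1\times E_2\times E_3)$ inside the Beauville (Chow--K\"unneth) decomposition refined by the three factors. For $i\in\{1,2,3\}$ and an integer $m\geqslant 2$, let $\mu^{(i)}_m$ be pullback along multiplication by $m$ on the $i$-th factor; these operators commute, and the motivic identity $h(E_1\times E_2\times E_3)=h(E_1)\otimes h(E_2)\otimes h(E_3)$ induces (rationally) a splitting of $\ch^2(E_1\times E_2\times E_3)$ into weight spaces on which $\mu^{(i)}_m$ acts by a power $m^{a}$, $m^{b}$, $m^{c}$. I would only need to know in which weights each group lies: $G^{0,2,2}=p_{23}^{\ast}\ch^2(E_2\times E_3)$ is fixed by $\mu^{(1)}_m$, so it has $\mu^{(1)}$-weight $0$; dually $G^{2,0,2}=p_{13}^{\ast}\ch^2(E_1\times E_3)$ has $\mu^{(2)}$-weight $0$; and the crucial input is that the summand $\Hom(J(E_i),J(E_j))\subseteq\ch^1(E_i\times E_j)$ is pure of bi-weight $(1,1)$, because the line bundle attached to a homomorphism is $[-1]$-symmetric, hence lies in Beauville's symmetric summand, while a direct computation with pullbacks of Poincar\'e-type bundles shows $\mu^{(i)}_m$ (and $\mu^{(j)}_m$) act on it by $m$. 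One further records, from the same product decomposition, that $\ch^2(E_i\times E_j)=\ch_0(E_i\times E_j)$ carries only $\mu^{(i)}$-weights $\{1,2\}$ --- the four summands being $\mbz$ and $J(E_j)$ (of $E_i$-weight $2$) together with $J(E_i)$ and the Albanese kernel (of $E_i$-weight $1$).

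Granting this, $G^{1,1,2}$ (the image of $\Hom(J(E_1),J(E_2))\otimes\ch^1(E_3)$) is pure of weight $(1,1)$ in the first two factors, while $G^{2,1,1}$ (the image of $\ch^1(E_1)\otimes\Hom(J(E_2),J(E_3))$, with $\ch^1(E_1)=\mbz[e_1]\oplus J(E_1)$ of weights $2,1$) has $\mu^{(1)}$-weights $\{1,2\}$ and is pure of $\mu^{(2)}$-weight $1$; and, by the weight-$\{1,2\}$ fact above, $G^{0,2,2}$ has no $\mu^{(2)}$-weight-$0$ part and $G^{2,0,2}$ has no $\mu^{(1)}$-weight-$0$ part. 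Part (a) is then formal: in a relation $a+b+c=0$ with $a\in G^{0,2,2}$, $b\in G^{1,1,2}$, $c\in G^{2,0,2}$, comparing $\mu^{(2)}$-weight-$0$ parts kills $a$ and $b$ and forces $c=0$; comparing $\mu^{(1)}$-weight-$0$ parts of $a+b=0$ forces $a=0$, hence $b=0$. Part (b): writing $x=a+b+c\in G_{12}\cap G^{2,1,1}$ by (a), the $\mu^{(1)}$-weight-$0$ part of $x$ is $a$, which vanishes since $G^{2,1,1}$ has no such part, so $a=0$; the $\mu^{(2)}$-weight-$0$ part of $x=b+c$ is $c$, which vanishes since $G^{2,1,1}$ is pure of $\mu^{(2)}$-weight $1$, so $c=0$; therefore $x=b\in G^{1,1,2}$, and when $E_1\nsim E_2$ we have $\Hom(J(E_1),J(E_2))=0$, so $G^{1,1,2}=0$ and $x=0$.

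The step I expect to require real care is making the weight arguments valid over $\mbz$, since Beauville's decomposition only splits Chow groups after tensoring with $\mbq$. Two ingredients are needed. First, the pieces actually used --- the $\Hom$-summands and the lines $\mbz[e_i]$ --- must be shown to sit inside unambiguous torsion-free subgroups already integrally; for the $\Hom$-summands this is the seesaw decomposition $\pic(E_i\times E_j)=p_i^{\ast}\pic(E_i)\oplus p_j^{\ast}\pic(E_j)\oplus\Hom(J(E_i),J(E_j))$, which is integral, together with torsion-freeness of $\Hom$. Second, to upgrade ``$\mu^{(i)}$-weight-$0$ part vanishes rationally'' to ``vanishes integrally'' for the pullbacks $G^{0,2,2}$ and $G^{2,0,2}$, one uses that the Albanese kernel of the abelian surface $E_i\times E_j$ is torsion-free (Roitman's theorem): a $\mu^{(i)}$-weight-$0$ element $z$ of $\ch_0(E_i\times E_j)$ must have degree $0$, must be Albanese-trivial (since $\mu^{(i)}_m$ acts on $\mathrm{Alb}$ by $[m]\times[m^2]$, whose only common fixed point over all $m$ is $0$), and satisfies $\mu^{(i)}_m z=mz$, so $(m-1)z=0$ for all $m$, forcing $z$ torsion, hence $z=0$. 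With these in hand, the bookkeeping of weights sketched above goes through.
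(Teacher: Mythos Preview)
Your weight-space approach is a legitimate alternative to the paper's, but the integrality discussion in your final paragraph does not close the argument. The phrase ``comparing $\mu^{(2)}$-weight-$0$ parts'' of $a+b+c=0$ presupposes an integral \emph{projector} onto the weight-$0$ summand; merely knowing that $G^{0,2,2}$ and $G^{1,1,2}$ individually contain no $\mu^{(2)}$-fixed classes is not enough, since from $a+b=-c$ with $c$ fixed you cannot conclude $c=0$ unless the fixed locus is already a direct summand. Your Roitman paragraph addresses only the weaker statement, and within it the step ``$\mu^{(i)}_m z=mz$ on the Albanese kernel'' is asserted rather than proved. The fix is simpler than what you attempt: the Chow--K\"unneth decomposition $h(E_i)=\mathbf{1}\oplus h^1(E_i)\oplus\mathbf{L}$ is already integral, because the projectors $[e_i\times E_i]$, $[\Delta_{E_i}]-[e_i\times E_i]-[E_i\times e_i]$, $[E_i\times e_i]$ are honest orthogonal idempotents in $\ch^1(E_i\times E_i)$. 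Hence the $27$-fold splitting of $\ch^2(E_1\times E_2\times E_3)$ holds over $\mbz$, and once you place each $G^{p,q,r}$ in its $(a_1,a_2,a_3)$-pieces your bookkeeping goes through without Roitman.

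The paper's proof, by contrast, is a few lines of elementary intersection theory. For $\alpha\in G_{12}\cap G^{2,1,1}$ expressed as in \eqref{intersect_directsum}, one uses that the $\Hom$-summand of $\ch^1(E_i\times E_j)$ is orthogonal to the pullback summands under the intersection pairing (recorded in \S\ref{subsec_decomp_cycle}) to see that $\alpha\cdot[E_1\times e_2\times E_3]=\sum_i\alpha_i\times[e_2]\times\gamma_i$ and $\alpha\cdot[e_1\times E_2\times E_3]=\sum_j[e_1]\times\beta_j\times\theta_j$; pushing forward to $E_1\times E_3$ and $E_2\times E_3$ then kills the $G^{2,0,2}$ and $G^{0,2,2}$ contributions, and setting $\alpha=0$ gives (a). No motivic machinery is needed. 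In fact ``intersect with $[E_1\times e_2\times E_3]$ and push forward to $E_1\times E_3$'' is exactly restriction along $E_1\times\{e_2\}\times E_3\hookrightarrow E_1\times E_2\times E_3$, which up to the split injection $p_{13}^{\ast}$ is the integral Chow--K\"unneth projector onto the $h^0(E_2)$-summand --- so the paper's argument is the hands-on incarnation of the projector you were reaching for.
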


\begin{proof}
	We first prove (b). Any  $\alpha \in G_{12} =   G^{0,2,2} + G^{1,1,2}+G^{2,0,2}$ can be represented as 
	\begin{align} \label{intersect_directsum}
		\alpha =	\sum_{j} [E_1] \times \beta_{j}  \times \theta_{j}+\sum_{i} \alpha_{i} \times [E_2] \times \gamma_{i}   + \sum_{k} D_k  \times \eta_k  \in \ch^2(E_1 \times E_2 \times E_3)
	\end{align}
	for some  $\alpha_i \in \ch^1(E_1)$, $\beta_j \in \ch^1(E_2)$, $\gamma_i, \theta_j, \eta_k  \in \ch^1(E_3)$  and $D_{k} \in \textup{Hom}(J(E_1),J(E_2))$.  Denote by $e_i \in E_i$ the origin. If we assume further $\alpha \in G^{2,1,1}$, then  
	$$\big(\alpha.[E_1] \times [e_2] \times [E_3]\big) = 0 = \big(\alpha.\ [e_1] \times [E_{2}] \times [E_3]\big)$$
	and this implies
	$
	\sum_{i} \alpha_{i} \times [e_2] \times \gamma_{i} = 0 = \sum_{j} [e_{1}] \times \beta_{j} \times \theta_{j}.
	$
	By pushing  forward the two equations to $E_1 \times E_3$ and $E_2 \times E_3$ respectively, we get
	\begin{equation*}
		0 = \sum_{i} \alpha_{i} \times\gamma_{i} \in \ch^2(E_1 \times E_3), \ \ \  0 = 
		\sum_{j} \beta_{j} \times \theta_{j} \in \ch^2(E_2 \times E_3).
	\end{equation*}
	Therefore, the first two summands in \eqref{intersect_directsum} are trivial and $\alpha = \sum_{k} D_k  \times \eta_k  \in G^{1,1,2}$. 
	
	Since $\alpha \in G_{12} \cap G^{2,1,1}$ is arbitrary in the above argument, we conclude  that $G_{12} \cap G^{2,1,1}$ is contained in $G^{1,1,2}$. In particular, if $E_1 \nsim E_2$, then  $G^{1,1,2} = 0$, which implies $G_{12} \cap G^{2,1,1} = 0$.
	
	For (a), it remains to show that $G_{12} =  G^{0,2,2} + G^{1,1,2} +G^{2,0,2}$ is a direct sum. By setting $\alpha = 0$ in \eqref{intersect_directsum}, the above argument still works and one concludes that the three summands in \eqref{intersect_directsum} have to be trivial, hence $G_{12} =   G^{0,2,2} \oplus G^{1,1,2}\oplus G^{2,0,2}$.
\end{proof}

	When $E_1 \sim E_2$, the group $G^{1,1,2}$ is nontrivial and  $G_{12} \cap G^{2,1,1}$ is not necessarily trivial. We refer to \Cref{counterexample} for a counterexample. However, for our purpose, it would be enough to know whether a certain torsion class $Z$ in $G^{2,1,1}$ belongs to $G_{12}$. If we know in addition that $Z \notin G^{1,1,2}$, then the above lemma implies immediately that $Z \notin G_{12}$.
	
	The following result is useful to test whether a homologically trivial class $Z$ lies in $G^{1,1,2}$. 
\begin{lem} \label{ker_sum}
	Let $E_1,E_2$ be two elliptic curves and $\Sigma \colon E_2 \times E_2 \ra E_2$  be the addition map.  Consider the pushforward 
$(\textup{id}_{E_1} \times \Sigma)_{\ast} \colon \ch^2(E_1 \times E_2 \times E_2)  \ra \ch^1(E_1 \times E_2)
$. Then    $$(\textup{id}_{E_1} \times \Sigma)_{\ast} ([g] \otimes \beta) = \deg(\beta) \cdot [g] $$
for any $g \in \textup{Hom}(J(E_1),J(E_2))$ and $\beta \in \ch^1(E_2)$.
	In particular,  the homologically trivial subgroup \begin{equation}
		G_{\lhom}^{1,1,2} \coloneqq \textup{im}\big( \textup{Hom}(J(E_1),J(E_2)) \otimes \ch^1(E_3)_{\hom}  \ra \ch^2(E_1 \times E_2 \times E_3)\big) \subset G^{1,1,2}
	\end{equation} is contained in $\ker(\textup{id}_{E_1} \times \Sigma)_{\ast}$. Note that the conclusion is automatically true when $E_1 \nsim E_2$.
\end{lem}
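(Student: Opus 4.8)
I want to prove the pushforward formula $(\id_{E_1} \times \Sigma)_{\ast}([g] \otimes \beta) = \deg(\beta) \cdot [g]$ for $g \in \Hom(J(E_1), J(E_2))$ and $\beta \in \ch^1(E_2)$, and then deduce that $G^{1,1,2}_{\lhom} \subset \ker(\id_{E_1} \times \Sigma)_{\ast}$. The key point is to choose a concrete cycle representative for the class $[g] \in \ch^1(E_1 \times E_2)$ corresponding to a morphism $g \colon J(E_1) \to J(E_2)$. Since $\Hom(J(E_1),J(E_2))$ sits inside $\ch^1(E_1 \times E_2)$ via the graph construction, and by the decomposition \eqref{prod_decomposition} the summand $\Hom(J(E_1),J(E_2))$ is the orthogonal complement (under the intersection pairing) of the two "fibre" summands, the natural representative of $[g]$ is $[\Gamma_g] - [E_1 \times e_2] - \deg(g)\cdot[e_1 \times E_2]$, using the decomposition \eqref{decomp_graph} of the graph of an isogeny (extended by linearity and continuity to arbitrary $g$, allowing $g=0$, in which case $[g]=0$ and the formula is trivial). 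So it suffices to compute the pushforward on the class $[\Gamma_g \times \beta] \subset E_1 \times E_2 \times E_2$ and on the two correction terms, and combine.

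\textbf{Main computation.} The cycle $\Gamma_g \times E_2 \subset E_1 \times E_2 \times E_2$ maps under $\id_{E_1} \times \Sigma$ to $E_1 \times E_2$; a general point $(x, g(x), z)$ goes to $(x, g(x)+z)$, so the restriction of $\id_{E_1} \times \Sigma$ to $\Gamma_g \times E_2$ is an isomorphism onto $E_1 \times E_2$ (its inverse sends $(x,w)$ to $(x,g(x),w-g(x))$). More generally, intersecting with $E_1 \times E_2 \times \beta$ for a zero-cycle $\beta = \sum n_i [z_i]$ of degree $d = \deg(\beta)$: the cycle $\Gamma_g \times \beta = \sum n_i (\Gamma_g \times z_i)$ pushes forward to $\sum n_i [\{(x, g(x)+z_i)\}] = \sum n_i [\Gamma_{g} \text{ translated by } z_i]$, which has class $\sum n_i ([\Gamma_g] + \text{(fibre corrections))} = d[\Gamma_g] + (\text{terms in } \ch^1(E_1) \oplus \ch^1(E_2))$ — but one should track this directly on the level of the decomposition \eqref{prod_decomposition} rather than worry about translation formulas: the morphism induced on Jacobians by $\{(x,g(x)+z_i)\}$ is still $g$, so its $\Hom$-component is exactly $[g]$, independent of the translation $z_i$. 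Hence $(\id_{E_1}\times\Sigma)_{\ast}[\Gamma_g \times \beta]$ has $\Hom$-component $d\cdot[g]$. For the correction terms: $(\id_{E_1}\times\Sigma)_{\ast}([E_1 \times e_2] \times \beta) = [E_1] \times \Sigma_{\ast}(e_2 \times \beta)$, a pullback of a zero-cycle from $E_2$, so it lies in the $\ch^1(E_2)$-summand and has zero $\Hom$-component; similarly $(\id_{E_1}\times\Sigma)_{\ast}([e_1 \times E_2] \times \beta) = [e_1] \times \Sigma_{\ast}(E_2 \times \beta)$ lies in $\ch^1(E_1) \otimes [E_2]$ and also has zero $\Hom$-component. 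Combining, and using that the $\Hom$-summand of \eqref{prod_decomposition} is a direct summand, we get that $(\id_{E_1}\times\Sigma)_{\ast}([g]\otimes\beta)$ has $\Hom$-component $d\cdot[g]$; and since the left side is manifestly of cohomological type $H^1(E_1)\otimes H^1(E_2)$ (the map $\id_{E_1}\times\Sigma$ is compatible with the Künneth grading and $[g]\otimes\beta$ lives in type $(1,1,0)$, which maps into type $(1,1)$), it has no $\ch^1(E_1)$- or $\ch^1(E_2)$-component at all, so it equals $\deg(\beta)\cdot[g]$ exactly.

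\textbf{Conclusion for $G^{1,1,2}_{\lhom}$.} If $\beta \in \ch^1(E_3)_{\hom} = \ch^1(E_2)_{\hom}$ is homologically trivial, then $\deg(\beta) = 0$, so by the formula $(\id_{E_1}\times\Sigma)_{\ast}([g]\otimes\beta) = 0$; since $G^{1,1,2}_{\lhom}$ is generated by such classes, it lies in $\ker(\id_{E_1}\times\Sigma)_{\ast}$. The final sentence (the case $E_1 \nsim E_2$) is immediate: then $\Hom(J(E_1),J(E_2)) = 0$, so $G^{1,1,2} = 0$ entirely.

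\textbf{Expected obstacle.} The only delicate point is making rigorous the claim that the $\Hom$-component of the pushed-forward graph cycle is exactly $[g]$, independent of translations — i.e. that translating $\Gamma_g$ in the $E_2$-direction does not change its image in $\Hom(J(E_1),J(E_2))$ under \eqref{prod_decomposition}. This follows because the induced morphism on Jacobians of a graph-like cycle $\{(x, g(x)+z)\}$ is computed by a correspondence argument and the constant translation $z$ contributes only to the "constant" (fibre) part, which is orthogonal to $\Hom$; alternatively one can invoke that \eqref{decomp_graph} is translation-invariant in its $\Hom$-component by the very definition of the decomposition. I would spell this out carefully but expect it to be short. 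Everything else is bookkeeping with the Künneth grading and the direct-sum decomposition \eqref{prod_decomposition}.
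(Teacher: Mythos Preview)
Your approach is essentially the same as the paper's: represent $[g]$ via the graph decomposition $[g] = [\Gamma_g] - [E_1 \times e_2] - \deg(g)\,[e_1 \times E_2]$, push forward each piece, and combine. However, there is a genuine gap in your final step. You correctly identify the $\Hom$-component of $(\id_{E_1}\times\Sigma)_*([g]\otimes\beta)$ as $\deg(\beta)\cdot[g]$, but then argue that the $\ch^1(E_1)$- and $\ch^1(E_2)$-components vanish because ``the left side is manifestly of cohomological type $H^1(E_1)\otimes H^1(E_2)$.'' This does not work: the cycle class map $\ch^1(E_1\times E_2) \to H^2(E_1\times E_2,\mbz)$ is \emph{not} injective---its kernel is $\pic^0(E_1\times E_2) \simeq E_1 \times E_2$, which under the decomposition \eqref{prod_decomposition} lies inside the degree-zero parts of $\ch^1(E_1) \oplus \ch^1(E_2)$. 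So a divisor class whose cycle class lies in $H^1\otimes H^1$ can still have non-trivial (degree-zero) components in $\ch^1(E_1)$ and $\ch^1(E_2)$; cohomological type alone cannot kill them. (Incidentally, the type of $[g]\otimes\beta$ is $(1,1,2)$, not $(1,1,0)$, since $\beta$ is a zero-cycle on a curve.)

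The paper closes this gap by tracking the translated graph explicitly: for each $y \in E_2$ one has
\[
[\Gamma_{g \oplus y}] = [E_1 \times y] + \deg(g)\,[e_1 \times E_2] + [g] \ \in \ \ch^1(E_1 \times E_2),
\]
which is exactly your observation that translation does not change the $\Hom$-component, \emph{together with} the precise fibre terms. Summing over $\beta = \sum a_i [y_i]$ then gives
\[
(\id_{E_1}\times\Sigma)_*\big([\Gamma_g]\times\beta\big) = [E_1]\times\beta + \deg(g)\deg(\beta)\,[e_1\times E_2] + \deg(\beta)\,[g],
\]
and the first two terms cancel \emph{on the nose} against your two correction terms. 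You were one line away: rather than invoking cohomological type, simply write down the full decomposition of $[\Gamma_{g\oplus y}]$ (which you essentially acknowledge in your ``Expected obstacle'' paragraph) and observe the exact cancellation in $\ch^1(E_1\times E_2)$.
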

\begin{proof}
	We abuse the notation and  still denote by $g \colon E_1 \ra E_2$ the isogeny that induces the map $g \in \textup{Hom}(J(E_1),J(E_2))$.  For  any point $y \in E_2$, denote by $g \oplus y \colon E_1 \ra E_2 \colon x \mapsto g(x) \oplus y$ the translation of $g$ by the point $y$ on the image. Then the two graphs $\Gamma_g, \Gamma_{g \oplus y}$ decompose as	\begin{align} 
		[\Gamma_{g}] &=  [E_1 \times e_2]+ \deg(g) \cdot [e_1 \times E_2]    + [g],\\
		[\Gamma_{g \oplus y}] &=  [E_1 \times y] + \deg(g) \cdot [e_1 \times E_2]    + [g]  \ \ \ \ \textup{in} \   \ch^1(E_1 \times E_2), \nonumber
	\end{align} 
	and for any $\beta = \sum_{i} a_i[y_i] \in \ch^1(E_2)$, we have
	 \begin{align*}
			(\textup{id}_{E_1} \times \Sigma)_{\ast}([g] \otimes \beta) &= (\textup{id}_{E_1} \times \Sigma)_{\ast} \big[[\Gamma_g] \otimes \beta - [E_1 \times e_2] \otimes \beta - \deg(g) \cdot [e_1 \times E_2] \otimes \beta)\big] \\
			&= \sum_i a_i[\Gamma_{g \oplus y_i}] -[E_1] \otimes \beta - \deg(\beta) \cdot \deg(g) \cdot [e_1 \times E_2] \\
			&= \sum_{i} a_i \big([E_1] \times [y_i] + \deg(g) \cdot [e_1 \times E_2]    + [g] \big) 
			-[E_1] \otimes \beta  \\
			& \qquad \qquad \qquad \qquad \ \ - \deg(\beta) \cdot \deg(g) \cdot [e_1 \times E_2]\\
			&= \sum_i a_i \cdot [g] = \deg(\beta) [g].
		\end{align*}	
\end{proof}
 
\subsection{Case 1: $E_1 \nsim E_2$ or $4 \nmid n$} \label{sec_case1}
We apply the observations in \Cref{sec_decomp_triple_elliptic} to show that $\ord(\kappa_t^{\prime}) = d(n)$. The following result covers the generic cases of our main result  \Cref{ccc_Kummer_ord}.
\begin{prop}\label{order_kappa'_case1}
	Let $n>2$ be an integer and $t \in E_1$ be a torsion point of order $n$. If either $E_1 \nsim E_2$ or $4 \nmid n$, then the  constant cycle curve $E_t \subset X = \textup{Kum}(E_1 \times E_2)$  has 
	$\ord(E_t) = d(n)$.
\end{prop}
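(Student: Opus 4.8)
The plan is to convert the statement into a divisibility fact about the single class $[Z_t^{\prime}]=2([t]-[e_1])\times[\id]\in\ch^2(E_1\times E_2\times E_t)$ and then to treat the two hypotheses $E_1\nsim E_2$ and $4\nmid n$ by slightly different endgames. By \Cref{order_kappa_same} we have $\ord(E_t)=\ord(\kappa_t^{\prime})$, and applying \Cref{lem_bloch} to the surface $S=E_1\times E_2$ and the curve $C=E_t$, the order $\ord(\kappa_t^{\prime})$ is the least integer $N\geqslant1$ with $N[Z_t^{\prime}]$ in $G_{12}=\textup{im}\!\big(\ch^1(E_1\times E_2)\otimes\ch^1(E_t)\to\ch^2(E_1\times E_2\times E_t)\big)$, the subgroup of \Cref{sec_decomp_triple_elliptic} with $E_3=E_t$. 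Since $d(n)[Z_t^{\prime}]=0$ by \Cref{order_CycleInProduct}(b), the bound $\ord(\kappa_t^{\prime})\mid d(n)$ is automatic, so everything reduces to showing that $N[Z_t^{\prime}]\in G_{12}$ forces $d(n)\mid N$. As $2([t]-[e_1])\in\ch^1(E_1)$ and $[\id]\in\Hom(J(E_2),J(E_t))$, the class $N[Z_t^{\prime}]$ lies in $G^{2,1,1}$ and is homologically trivial; hence $N[Z_t^{\prime}]\in G_{12}\cap G^{2,1,1}$, and \Cref{chow_tripleprod}(b) forces it into the ``mixed'' piece $G^{1,1,2}$. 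This reduction to $G^{1,1,2}$ is the conceptual heart of the proof, even though the lemma supplying it is already in hand.

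For the endgame the two hypotheses split. If $E_1\nsim E_2$, then $\Hom(J(E_1),J(E_2))=0$, so $G^{1,1,2}=0$ and therefore $N[Z_t^{\prime}]=0$; combined with $\ord([Z_t^{\prime}])=d(n)$ in $\ch^2(E_1\times E_2\times E_t)$ from \Cref{order_CycleInProduct}(b), this gives $d(n)\mid N$. If instead $4\nmid n$ (with $E_1\sim E_2$ now allowed), I would push forward along $\id_{E_1}\times\Sigma$, where $\Sigma$ is the addition map on the last two factors after identifying $E_t\simeq E_2$. By the formula in \Cref{ker_sum}, $(\id_{E_1}\times\Sigma)_{\ast}$ sends every element of $G^{1,1,2}$ into the direct summand $\Hom(J(E_1),J(E_2))$ of $\ch^1(E_1\times E_2)$, so $(\id_{E_1}\times\Sigma)_{\ast}(N[Z_t^{\prime}])$ lies there. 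On the other hand, expanding $[\id]=[\Delta]-[E_2\times e]-[e_2\times E_t]$ via \eqref{diag_decomp} and using that $\Sigma$ restricts to multiplication by $2$ on the diagonal (degree $4$) and to an isomorphism on each axis, a direct computation yields
\[
(\id_{E_1}\times\Sigma)_{\ast}\big(N[Z_t^{\prime}]\big)=p_1^{\ast}\!\big(4N([t]-[e_1])\big),
\]
where $p_1\colon E_1\times E_2\to E_1$ is the projection; this class lies instead in the direct summand $\ch^1(E_1)$ of \eqref{prod_decomposition}. Since these two summands meet only in $0$, the class vanishes, so $4N([t]-[e_1])=0$ in $\ch^1(E_1)$, i.e.\ $n\mid 4N$; as $4\nmid n$ makes $d(n)$ equal to the odd part of $n$, this forces $d(n)\mid N$, completing the proof.

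The only genuinely delicate point is the degree bookkeeping in that pushforward: the factor $4=\deg[2]$ produced by $\Sigma|_{\Delta}$ is harmless exactly when $4\nmid n$, whereas for $4\mid n$ the identical computation only yields $(d(n)/2)\mid N$, which is too weak to pin down $\ord(E_t)$. This is precisely the obstruction that isolates the excluded $E_1\sim E_2$, CM, $4\mid n$ case, to be treated in \Cref{sec_case2} via the transcendental intermediate Jacobian, where \Cref{counterexample} exhibits a situation in which $\ord(E_t)$ is genuinely smaller than $d(n)$.
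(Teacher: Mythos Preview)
Your proof is correct and follows essentially the same approach as the paper: reduce to showing $N[Z_t^{\prime}]\in G_{12}$ forces $d(n)\mid N$, use \Cref{chow_tripleprod}(b) to land in $G^{1,1,2}$, then handle $E_1\nsim E_2$ by the vanishing of $G^{1,1,2}$ and handle $4\nmid n$ via the pushforward $(\id_{E_1}\times\Sigma)_{\ast}$ and the computation $(\id_{E_1}\times\Sigma)_{\ast}(N[Z_t^{\prime}])=4N([t]-[e_1])\times[E_2]$. The only cosmetic difference is that in the $4\nmid n$ case the paper invokes $G^{1,1,2}_{\lhom}\subset\ker(\id_{E_1}\times\Sigma)_{\ast}$ and argues by contradiction, whereas you phrase the same step as a direct-sum comparison inside $\ch^1(E_1\times E_2)$; both are immediate from \Cref{ker_sum}.
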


\begin{proof}
Let  $[Z_t^{\prime}] = 2 ([t]-[e_1]) \times [\id]  \in \ch^2(E_1 \times E_2 \times E_t)$ be the torsion class  of order $d(n)$  in \Cref{order_CycleInProduct}. By the discussion at the beginning of \Cref{sec_ord_kprime}, it suffices to prove that the minimal positive integer $N$ such that $$N[Z_t^{\prime}] \in G_{12} \coloneqq \textup{im}(\ch^1(E_1 \times E_2) \otimes \ch^1(E_t) \ra \ch^2(E_1 \times E_2 \times E_t))$$
is $d(n)$. Since $d(n)[Z_t^{\prime}] = 0 \in G_{12}$ by construction, it remains to show that $d(n)$ is minimal.

	We argue by contradiction. Assume that  there exists a factor $d_1 \mid d(n)$ with $1 \leqslant d_1 < d(n)$  such that $d_1[Z_t^{\prime}] \in G_{12}$. By definition, $d_1[Z_t^{\prime}]$ also lies in the subgroup
		 \begin{equation*} 	 G_{\lhom}^{2,1,1} \coloneqq \textup{im}\big(\ch^1(E_1)_{\lhom} \otimes \textup{Hom}(J(E_2),J(E_t)) \ra \ch^2(E_1 \times E_2 \times E_t)\big).
		 \end{equation*}
 Therefore, restricting
 \Cref{chow_tripleprod} to homologically trivial classes yields    \begin{equation*}
	d_1[Z_t^{\prime}] \in G_{\lhom}^{2,1,1} \cap G_{12} \subset G_{\lhom}^{1,1,2} = \textup{im}\big( \textup{Hom}(J(E_1),J(E_2)) \otimes \ch^1(E_t)_{\lhom}  \ra \ch^2(E_1 \times E_2 \times E_t)\big). 
 \end{equation*}
We now show that this leads to a contradiction under either of the two conditions stated, hence the  assumption fails. \\
(a)\ If $E_1 \nsim E_2$, then $\textup{Hom}(J(E_1),J(E_2)) = 0$ and $ G_{\lhom}^{1,1,2}= 0$.  However, since $\ord([Z_t^{\prime}]) = d(n)$, the class  $d_1[Z_t^{\prime}] \neq 0$, thus cannot be contained in $G_{\lhom}^{1,1,2}$, a contradiction.\\
(b)\ Now we only require $4 \nmid n$. Let $\Sigma \colon E_2 \times E_t \ra E_2$ be the addition map (we identify $E_2 \simeq E_t$) and $\textup{id}_{E_1} \times \Sigma \colon E_1 \times E_2 \times E_t \ra E_1 \times E_2$ be the product map. Recall from \Cref{ker_sum} that $G_{\lhom}^{1,1,2}$ is contained in the kernel of the induced map $(\id_{E_1} \times \Sigma)_{\ast} \colon \ch^2(E_1 \times E_2 \times E_t) \ra \ch^1(E_1 \times E_2)$. We show that  $d_1[Z_t^{\prime}] \notin \ker(\id_{E_1} \times \Sigma)_{\ast}$, which again leads to a contradiction to  $d_1[Z_t^{\prime}] \in G_{\lhom}^{1,1,2}$.

	To verify this, one  computes that	$(\textup{id}_{E_1} \times \Sigma)_{\ast}\big(d_1[Z_{t}^{\prime}]\big) =  4d_1([t]-[e_1]) \times [E_2] \in  \ch^1(E_1 \times E_2)$.
	Consider the morphism induced by $4d_1([t]-[e_1]) \times [E_2]$ as a correspondence: $$\ch^2(E_2) \ra \ch^1(E_1), \ \ \  [e_2] \mapsto 4d_1([t]-[e_1]).$$
		Since $t \in E_1$ is a torsion point of order $n$ and $4 \nmid n$, the torsion class $2d_1([t]-[e_1]) \in \ch^1(E_1)$ is of odd order $\frac{d(n)}{d_1}$, hence $4d_1([t]-[e_1]) \neq 0$ and the class $4d_1([t] - [e_1]) \times [E_1]$ is nontrivial itself. Therefore, $d_1[Z_t^{\prime}] \notin \ker(\id_{E_1} \times \Sigma)_{\ast}$ and it cannot be contained in $G_{\lhom}^{1,1,2}$.
 
	To sum up, when $E_1 \nsim E_2$ or $4 \nmid n$, the positive integer $d(n)$ is minimal with the property that $d(n)[Z_t^{\prime}] \in G_{12}$, hence we conclude  that $\ord(E_t) = d(n)$.
\end{proof}

\subsection{Case 2: $E_1 \sim E_2$ and $4 \mid n$} \label{sec_case2}
The Chow group argument from  \Cref{sec_case1} no longer applies to this case. However, one can still compute $ \ord(\kappa_t^{\prime})$ in  $J^3(E_1 \times E_2 \times E_t)$. To be explicit, by \Cref{inj_tr_AJ} and \Cref{tr_AJ_absurface}, we have $\ord(\kappa_t^{\prime}) = \ord(\Phi_{\mathrm{tr}}([Z_t^{\prime}]))$, and this can be determined by  comparing the image of   $[Z_t^{\prime}]$ with classes from $\ch^1(E_1 \times E_2) \otimes \ch^1(E_t)_{\lhom}$ under the Abel--Jacobi map $\Phi \colon \ch^2(E_1 \times E_2 \times E_t)_{\lhom} \ra J^3(E_1 \times E_2 \times E_t)$.  The  key ingredient  is again the injectivity  of Abel--Jacobi maps on codimension-two torsion classes (\cite{CTSS83AJcodimtwo}).
\begin{prop} \label{ord_4torsion}
	Let $E_1,E_2$ be two isogeneous elliptic curves, $t \in E_1$  be a torsion point of order $n > 2$ and $4 \mid n$. Then the constant cycle curve $E_t \subset X = \textup{Kum}(E_1 \times E_2)$ has order $\frac{n}{2}$, if
	\begin{enumerate}[\normalfont(a)]
		\item either $E_1$ and $E_2$ are isomorphic with CM,
		\item or $E_1$ and $E_2$ have no CM.
	\end{enumerate}  
\end{prop}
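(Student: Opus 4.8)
The plan is to run the strategy of \Cref{sec_strategy} for the torsion class $[Z_t^{\prime}]=2([t]-[e_1])\times[\id]$, as announced at the start of \Cref{sec_case2}. Writing $A=E_1\times E_2$, \Cref{order_kappa_same}, \Cref{inj_tr_AJ} and \Cref{tr_AJ_absurface} give
\[
\ord(E_t)\;=\;\ord(\kappa_t^{\prime})\;=\;\ord\!\big((p_{\mathrm{tr}}\circ\Phi)[Z_t^{\prime}]\big),
\]
where $\Phi\colon\ch^2(A\times E_t)_{\lhom}\to J^3(A\times E_t)$ is the Abel--Jacobi map and $p_{\mathrm{tr}}$ the projection onto $J^3(A\times E_t)/J_{\mathrm{alg}}^3(A\times E_t)$. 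By \Cref{order_CycleInProduct}, the class $\Phi([Z_t^{\prime}])=\tfrac{1}{d(n)}\int_{\gamma_t\times[\id]}$ is torsion of order $d(n)=n/2$, where $\gamma_t\in H_1(E_1,\mbz)$ is the primitive cycle with $\Phi_{E_1}(2([t]-[e_1]))=\tfrac{1}{n/2}\int_{\gamma_t}$. In particular $\ord(p_{\mathrm{tr}}\Phi([Z_t^{\prime}]))$ divides $n/2$, and what remains is to show that $\tfrac1m\int_{\gamma_t\times[\id]}\notin J_{\mathrm{alg}}^3(A\times E_t)$ for every integer $m$ with $1<m\mid n/2$.

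The next step is to localise to one Künneth summand. Put $M\coloneqq H_1(E_1,\mbz)\otimes H_1(E_2,\mbz)$. Since $[\id]\in\Hom(J(E_2),J(E_t))$, its class lies in $H_1(E_2,\mbz)\otimes H_1(E_t,\mbz)$, so the cycle $\gamma_t\times[\id]$ lies in the Künneth summand $V_\mbz\coloneqq M\otimes H_1(E_t,\mbz)$ of $H_3(A\times E_t,\mbz)$, and $\Phi([Z_t^{\prime}])$ lies in the corresponding summand $J_V\subset J^3(A\times E_t)$. Two facts about products of curves then apply: the torsion subgroup of $J_V$ equals $V_\mbq/V_\mbz$, because $F^2V\cap\overline{F^2V}=0$ for the weight-three Hodge structure $V=H^1(E_1)\otimes H^1(E_2)\otimes H^1(E_t)$; and $J_{\mathrm{alg}}^3(A\times E_t)$ respects the Künneth decomposition, so that $J_{\mathrm{alg}}^3\cap J_V$ is the subtorus attached to $L\otimes H_1(E_t,\mbz)\subset V_\mbz$, where $L\subset M$ is the saturated sublattice associated with $\Hom(J(E_1),J(E_2))$ via \Cref{subsec_decomp_cycle}. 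Unwinding the definitions, $\tfrac1m\int_{\gamma_t\times[\id]}\in J_{\mathrm{alg}}^3$ if and only if $\gamma_t\times[\id]\in(L\otimes H_1(E_t,\mbz))+mV_\mbz$. Finally, $[\id]$ is the class of an isomorphism $E_2\xrightarrow{\sim}E_t$, hence unimodular; writing $[\id]=\sum_i a_i\otimes b_i$ with $\{b_i\}$ a basis of $H_1(E_t,\mbz)$ (so that, by unimodularity, $\{a_i\}$ is a basis of $H_1(E_2,\mbz)$) and comparing $b_i$-components, the condition turns into the purely lattice-theoretic statement
\[
\gamma_t\otimes H_1(E_2,\mbz)\ \subseteq\ L+mM.
\]
Thus everything reduces to showing this inclusion fails for every $m>1$ dividing $n/2$.

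I would then finish case by case. In case (b) the group $\Hom(J(E_1),J(E_2))$ has rank one, so $L$ is a primitive rank-one sublattice of $M\cong\mbz^4$, while $\gamma_t\otimes H_1(E_2,\mbz)$ is primitive of rank two (as $\gamma_t$ is primitive). Reducing modulo $mM$, the image of $\gamma_t\otimes H_1(E_2,\mbz)$ is isomorphic to $(\mbz/m)^2$ and the image of $L$ is cyclic, and $(\mbz/m)^2$ does not embed into $\mbz/m$ for $m>1$ --- contradiction. In case (a) the group $\Hom(J(E_1),J(E_2))$ has rank two, so $L$ is primitive of rank two; if the displayed inclusion held, then $(\gamma_t\otimes H_1(E_2,\mbz))+mM\subseteq L+mM$, and since $\gamma_t\otimes H_1(E_2,\mbz)$ and $L$ are both primitive of rank two, both sides have index $m^2$ in $M$, so the inclusion is an equality. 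This is the point at which $E_1\cong E_2$ enters: a fixed isomorphism $\phi_0\colon E_1\xrightarrow{\sim}E_2$ lies in $\Hom(J(E_1),J(E_2))$, so its class $[\phi_0]\in L$ is unimodular in $M$; but a short determinant computation, working in bases adapted to a splitting $H_1(E_1,\mbz)=\mbz\gamma_t\oplus\mbz\gamma_t'$, shows that the image of $[\phi_0]$ in $(H_1(E_1,\mbz)/\mbz\gamma_t)\otimes H_1(E_2,\mbz)\cong H_1(E_2,\mbz)$ is not divisible by $m$ for any $m>1$. Hence $[\phi_0]\notin(\gamma_t\otimes H_1(E_2,\mbz))+mM$, contradicting $[\phi_0]\in L\subseteq(\gamma_t\otimes H_1(E_2,\mbz))+mM$. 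So the inclusion fails in both cases, whence $\ord(p_{\mathrm{tr}}\Phi([Z_t^{\prime}]))=n/2$ and $\ord(E_t)=n/2$.

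The main obstacle I anticipate is the bookkeeping in the second step: identifying the summand $J_V$, verifying that its torsion is $V_\mbq/V_\mbz$ and that $J_{\mathrm{alg}}^3\cap J_V$ is exactly $\Hom(J(E_1),J(E_2))\otimes J(E_t)$, and translating membership in $J_{\mathrm{alg}}^3$ into the lattice inclusion $\gamma_t\otimes H_1(E_2,\mbz)\subseteq L+mM$. Once this is in place the two cases are elementary. The point worth stressing is that case (a) uses nothing about $E_t$ or the CM order beyond the facts that $\Hom(J(E_1),J(E_2))$ has rank two and \emph{contains an isomorphism}; this is exactly why the argument --- and indeed the equality $\ord(E_t)=n/2$ --- fails in the excluded case where $E_1\sim E_2$ are CM but non-isomorphic (cf.\ \Cref{counterexample}).
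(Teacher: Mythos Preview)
Your argument is correct and takes a genuinely different route from the paper. Both proofs pass through the Abel--Jacobi map and invoke the injectivity result of \cite{CTSS83AJcodimtwo}, but the paper splits the problem into a $2$-primary part and an odd part: it first isolates a separate \Cref{case2_lem} showing that $Z_\alpha=\alpha\times[\id]\notin G_{12}$ for any nonzero $2$-torsion $\alpha$, proved by writing out the equation $\alpha\times[\id]=[\id]\times\beta_1+[\phi]\times\beta_2$ in coordinates and solving a system of congruences modulo~$2$; applying this with $\alpha=\tfrac{n}{2}([t]-[e_1])$ gives $\tfrac{n}{4}\kappa_t'\neq 0$, and the odd part of the order is then pinned down by the pushforward $(\id_{E_1}\times\Sigma)_\ast$ exactly as in \Cref{order_kappa'_case1}. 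Your approach is more uniform: you translate the question, for every divisor $m>1$ of $n/2$ at once, into the lattice inclusion $\gamma_t\otimes H_1(E_2,\mbz)\subset L+mM$, and then kill it by rank/index counting together with the observation that an isomorphism $\phi_0\colon E_1\xrightarrow{\sim}E_2$ contributes a unimodular element of $L$. This avoids both the explicit mod-$2$ computation and the separate treatment of the odd part, and it makes very transparent why the hypothesis $E_1\cong E_2$ (and not merely $E_1\sim E_2$) is what matters in the CM case. The price you pay is the Hodge-theoretic bookkeeping you flagged yourself --- checking that $L$ is saturated, that $J_{\mathrm{alg}}^3$ respects the K\"unneth decomposition, and that membership in $J_{\mathrm{alg}}^3\cap J_V$ reduces to the stated lattice condition --- whereas the paper's argument, once \Cref{chow_tripleprod} is in hand, is a direct coordinate computation.
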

When $E_1 \sim E_2$ with CM,  the additional assumption $E_1 \simeq E_2$ cannot be easily removed. See \Cref{counterexample} for an example  where $\ord(E_t)$ can be either $n/2$ or strictly smaller, depending on certain numerical conditions on $E_1$ and $E_2$.

The proof of \Cref{ord_4torsion} reduces to the following lemma.
\begin{lem} \label{case2_lem}
	Let $E_1,E_2$ be two isogeneous elliptic curves. If $E_1$ and $E_2$ have CM, we assume in addition that $E_1 \simeq E_2$. Then for  any nonzero  two-torsion $\alpha \in \ch^1(E_1)$, the torsion class $Z_{\alpha} \coloneqq \alpha \times [\id] \in \ch^2(E_1 \times E_2 \times E_2)$ does not lie in the subgroup
	$$G_{12} \coloneqq \textup{im}\big( \ch^1(E_1 \times E_2) \otimes \ch^1(E_2)  \ra \ch^2(E_1 \times E_2 \times E_2)\big).$$
\end{lem}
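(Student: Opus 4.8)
The plan is to assume $Z_\alpha\in G_{12}$ and derive a contradiction, combining \Cref{chow_tripleprod} with the Abel--Jacobi map on the abelian threefold $A\coloneqq E_1\times E_2\times E_2$ and the injectivity of Abel--Jacobi maps on codimension-two torsion classes (\cite{CTSS83AJcodimtwo}). First I would reduce the problem. Since $Z_\alpha=\alpha\times[\id]$ lies in $G^{2,1,1}=\textup{im}(\ch^1(E_1)\otimes\Hom(J(E_2),J(E_2)))$ and is homologically trivial, \Cref{chow_tripleprod}(b) gives $Z_\alpha\in G_{12}\cap G^{2,1,1}\subseteq G^{1,1,2}$. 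Writing a representative $Z_\alpha=\sum_k [g_k]\times\beta_k$ with $g_k\in\Hom(J(E_1),J(E_2))$, homological triviality forces $\sum_k\deg(\beta_k)[g_k]=0$, so after subtracting this vanishing class times $[e_2]$ one may take all $\beta_k\in\ch^1(E_2)_{\lhom}$; hence $Z_\alpha\in G_{\lhom}^{1,1,2}\coloneqq\textup{im}(\Hom(J(E_1),J(E_2))\otimes\ch^1(E_2)_{\lhom})$. It therefore suffices to prove $Z_\alpha\notin G_{\lhom}^{1,1,2}$.

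Next I would pass to the Abel--Jacobi map $\Phi\colon\ch^2(A)_{\lhom}\to J^3(A)$. By \eqref{diag_decomp} the class $[\id]\in\ch^1(E_2\times E_2)$ has cohomology class the Künneth component of the diagonal in $H^1(E_2)\otimes H^1(E_2)$, which is primitive; since $\alpha$ is homologically trivial, $\Phi(Z_\alpha)$ is supported on the Künneth summand $J\subseteq J^3(A)$ coming from $H^1(E_1)\otimes H^1(E_2)\otimes H^1(E_2)$, where the external-product formula gives $\Phi(Z_\alpha)=\Phi_{E_1}(\alpha)\otimes[\id]$. As $\Phi_{E_1}(\alpha)$ is a nonzero $2$-torsion point of $J(E_1)$ and $[\id]$ is primitive, $\Phi(Z_\alpha)$ is a nonzero $2$-torsion point of $J$ (in particular $Z_\alpha\neq 0$). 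The same formula identifies $\Phi(G_{\lhom}^{1,1,2})$ with the abelian subvariety $B_N\subseteq J$ corresponding to the sub-Hodge structure $\Hom(J(E_1),J(E_2))\otimes H^1(E_2)$, where $\Hom(J(E_1),J(E_2))$ is regarded as the saturated sublattice of $H^1(E_1)\otimes H^1(E_2)$ it cuts out by the Hodge condition. Thus $Z_\alpha\in G_{\lhom}^{1,1,2}$ implies $\Phi(Z_\alpha)\in B_N$; conversely, given $\Phi(Z_\alpha)\in B_N=\Phi(G_{\lhom}^{1,1,2})$ one picks a $2$-torsion preimage $W\in G_{\lhom}^{1,1,2}$ and applies \cite{CTSS83AJcodimtwo} to the torsion class $Z_\alpha-W\in\ker\Phi$ to conclude $Z_\alpha=W\in G_{\lhom}^{1,1,2}$. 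So the lemma is equivalent to the statement that the $2$-torsion point $\Phi(Z_\alpha)$ does not lie in $B_N$.

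This last statement is a finite computation over $\mbf_2$. One has $J[2]\cong H_1(E_1,\mbz)/2\otimes H_1(E_2,\mbz)/2\otimes H_1(E_2,\mbz)/2$ (because $F^2H^3(A)$ meets the real cohomology trivially), and, using that $\Hom(J(E_1),J(E_2))$ is saturated, $B_N[2]=\overline{\Hom(J(E_1),J(E_2))}\otimes H_1(E_2,\mbz)/2$ inside $J[2]$, where the bar denotes reduction modulo $2$. Writing $\Phi_{E_1}(\alpha)=\tfrac12\gamma$ with $\gamma\in H_1(E_1,\mbz)$ primitive, we get $\Phi(Z_\alpha)=\overline\gamma\otimes\overline{[\id]}$, and since $\overline{[\id]}$ has full tensor rank (in a symplectic basis it reduces to $\bar a\otimes\bar b+\bar b\otimes\bar a$), the condition $\Phi(Z_\alpha)\in B_N[2]$ forces $\overline\gamma\otimes(H_1(E_2,\mbz)/2)\subseteq\overline{\Hom(J(E_1),J(E_2))}$, i.e.\ that the latter is a $2$-dimensional $\mbf_2$-space consisting entirely of rank-$\leqslant 1$ ``matrices''. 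Here is where the hypotheses enter. If $E_1\nsim E_2$ there is nothing to prove. If $E_1\sim E_2$ has no CM, then $\Hom(J(E_1),J(E_2))$ has rank $1$, so its reduction modulo $2$ is at most $1$-dimensional and cannot contain a $2$-dimensional subspace. If $E_1\simeq E_2$ has CM, then $\Hom(J(E_1),J(E_2))\cong\textup{End}(E_2)$ contains an isomorphism, that is an isogeny of (odd) degree $1$, whose reduction modulo $2$ has full rank, so the $2$-dimensional space $\overline{\Hom(J(E_1),J(E_2))}$ is not a space of rank-$\leqslant 1$ matrices. Each case is contradictory, so $Z_\alpha\notin G_{\lhom}^{1,1,2}$, hence $Z_\alpha\notin G_{12}$.

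The main obstacle is the middle step: locating $\Phi(Z_\alpha)$ and $\Phi(G_{\lhom}^{1,1,2})$ precisely inside the right Künneth summand of $J^3(A)$ and converting ``$\Phi(Z_\alpha)\in B_N$'' into the clean modulo $2$ condition. This rests on the saturation of the Néron--Severi-type lattices $\Hom(J(E_i),J(E_j))\subseteq H^1(E_i)\otimes H^1(E_j)$, on the computability of $J[2]$, and on the injectivity of $\Phi$ on codimension-two torsion. The hypothesis ``$E_1\simeq E_2$'' in the CM case is used exactly to furnish an odd-degree isogeny; dropping it allows the degeneration exhibited in \Cref{counterexample}.
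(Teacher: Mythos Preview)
Your argument is correct and follows the same overall route as the paper: reduce $Z_\alpha\in G_{12}$ to $Z_\alpha\in G_{\lhom}^{1,1,2}$ via \Cref{chow_tripleprod}, then use the Abel--Jacobi map together with the injectivity result of \cite{CTSS83AJcodimtwo} to turn this into a statement about $2$-torsion in the Künneth summand $J$ of $J^3(E_1\times E_2\times E_2)$, and finally derive a contradiction by a mod-$2$ lattice computation.

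The only genuine difference is in how the final mod-$2$ step is executed. The paper fixes generators $[\id],[\phi]$ of $\Hom(J(E_1),J(E_2))$, writes the identity $\Phi(Z_\alpha)=\Phi([\id]\times\beta_1+[\phi]\times\beta_2)$ in coordinates, and extracts a system of congruences that forces both coordinates of $\gamma$ to be even. You instead observe that saturation of $\Hom(J(E_1),J(E_2))$ in $H_1(E_1)\otimes H_1(E_2)$ identifies $B_N[2]$ with $\overline{\Hom}\otimes H_1(E_2,\mbf_2)$, and then use that $\overline{[\id]}=\bar w_0\otimes\bar w_1+\bar w_1\otimes\bar w_0$ has tensor rank two to force $\overline\gamma\otimes\bar w_0,\overline\gamma\otimes\bar w_1\in\overline{\Hom}$; the rank-$1$ nature of these two independent tensors then contradicts either $\dim_{\mbf_2}\overline{\Hom}\leqslant 1$ (no CM) or the presence of the rank-two element $\overline{[\id_{E_1}]}\in\overline{\Hom}$ (CM with $E_1\simeq E_2$). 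This coordinate-free packaging is a bit cleaner and makes transparent exactly where the hypothesis $E_1\simeq E_2$ enters (to produce an odd-degree isogeny, hence a rank-two reduction), which is consistent with the failure exhibited in \Cref{counterexample}.
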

\begin{proof}
(a) Consider first the case when $E_1 \simeq E_2$ with CM. Without loss of generality, we identify $E_1$ and $E_2$.   Then $\Hom(J(E_1),J(E_2)) \simeq \mbz \textup{id} \oplus \mbz \phi$, where $\phi$ is not a multiplication by an integer. 
	
		 We prove as in \Cref{order_kappa'_case1} by contradiction. Assume that $Z_{\alpha} \in G_{12}$. Since by definition  \begin{equation*} Z_{\alpha} \in G_{\lhom}^{2,1,1} \coloneqq \textup{im}\big(\ch^1(E_1)_{\lhom} \otimes \textup{Hom}(J(E_2),J(E_2)) \ra \ch^2(E_1 \times E_2 \times E_2)\big),
		\end{equation*} again  \Cref{chow_tripleprod} implies that 
		$$
		Z_{\alpha} \in G_{\lhom}^{1,1,2} =  \textup{im}\big( \textup{Hom}(J(E_1),J(E_2)) \otimes \ch^1(E_1)_{\lhom}  \ra \ch^2(E_1 \times E_2 \times E_2)\big)
	.$$ However, the argument in \Cref{order_kappa'_case1} does not lead to a contradiction to $Z_{\alpha} \in G_{\lhom}^{1,1,2}$ now: When $E_1 \sim E_2$ and $4 \mid n$, the group $G_{\lhom}^{1,1,2} \neq 0$ and $(\id_{E_1} \times \Sigma)_{\ast}(Z_{\alpha}) = 0$.  Our solution  is to compare the image of $Z_\alpha$ and the subgroup $G^{1,1,2}_{\lhom}$ under the Abel--Jacobi map $$\Phi \colon \ch^2(E_1 \times E_2 \times E_2)_{\lhom} \ra J^3(E_1 \times E_2 \times E_2).$$ 
	
	Since $Z_{\alpha} \in G_{\lhom}^{1,1,2}$,  there exists $\beta_1, \beta_2 \in \ch^1(E_2)_{\lhom}$  such that
	\begin{equation} \label{eq_contradiction}
	Z_{\alpha} = \alpha \times [\id] = [\id] \times \beta_1+ [\phi] \times \beta_2 \in \ch^2(E_1 \times E_2 \times E_2).
	\end{equation}
	Here $[\id],[\phi] \in \ch^1(E_1 \times E_2)$ denote the induced classes (see \Cref{subsec_decomp_cycle}). Note that the same argument in  \Cref{order_CycleInProduct}(b) implies that $Z_{\alpha}$ is a nonzero two-torsion class. Since the Abel--Jacobi map $\Phi$ preserves the order of any  codimension-two torsion class (\cite{CTSS83AJcodimtwo}), and $[\id],[\phi]$ are not divisible as generators of $\Hom(J(E_1),J(E_2))$, we see that 
	 $\beta_1,\beta_2 \in \ch^1(E_2)[2]$ and cannot be both trivial. 
	
	Now we compare the images of the two representations of  $Z_{\alpha}$ in \eqref{eq_contradiction} under the  Abel--Jacobi map $\Phi$.
	Since the Abel--Jacobi maps $\Phi_{E_1}, \Phi_{E_2}$ for $E_1,E_2$ are isomorphisms,  for  $\alpha \in \ch^1(E_1)[2]$ and $\beta_1, \beta_2 \in \ch^1(E_2)[2]$,  there exist $\gamma \in H_1(E_1,\mbz)$ and $\gamma_1, \gamma_2 \in H_1(E_2,\mbz)$
such that
\begin{equation*}
	\Phi_{E_1}(\alpha) = \frac{1}{2}\int_{\gamma} \in J(E_1), \ \ \Phi_{E_2}(\beta_1) = \frac{1}{2}\int_{\gamma_1}, \ \ \Phi_{E_2}(\beta_2) = \frac{1}{2}\int_{\gamma_2} \in J(E_2).
\end{equation*}
Then the image of \eqref{eq_contradiction} under $\Phi$ is by definition	\begin{align} \label{eq_mod_lattice}
		\Phi(Z_{\alpha}) = \frac{1}{2} \int_{\gamma \times [\id]} =  \frac{1}{2} \int_{[\id] \times \gamma_1} + \frac{1}{2} \int_{[\phi] \times \gamma_2}\in J^3(E_1 \times E_2 \times E_2).
	\end{align} 
	Let $v_0,v_1 \in H_1(E_1,\mbz)$ and $w_0,w_1 \in H_1(E_2,\mbz)$ be the canonical generators,  with topological intersection numbers $(v_0.v_0) = 0$, $(v_1.v_1)  = 0$ and $(v_0.v_1)= - (v_0.v_1) = 1$; the same relations hold for $w_0,w_1$. By writing
	\begin{equation*} 
		\gamma = b_0v_0 + b_1v_1 \in H_1(E_1,\mbz), \ \ \gamma_1 = c_0w_0 + c_1w_1, \ \ \  \gamma_2 = d_0w_0 + d_1w_1 \in H_1(E_2,\mbz),
	\end{equation*} 
 one reformulates the equation \eqref{eq_mod_lattice} as
\begin{align*}
	(b_0v_0 + b_1v_1) \otimes (-w_0 \otimes w_1 + w_1 \otimes w_0) \equiv &(-v_0 \otimes w_1 + v_1 \otimes w_0) \otimes (c_0w_0+c_1w_1) \\
   & + \big(\sum_{i,j \in \{0,1\}} a_{ij} \cdot v_i \otimes w_j\big) \otimes (d_0w_0 + d_1w_1) \\
   &\qquad \qquad \qquad \quad \textup{mod} \ \ 2H_3(E_1 \times E_2 \times E_2,\mbz),
\end{align*} which reduces to  the following congruence relations among  the integer coefficients:
\begin{align*}
	b_0 &\equiv a_{00}d_1 \equiv a_{01}d_0 + c_0, \ \  a_{00}d_0 \equiv a_{01}d_{1} +c_1 \equiv 0 \pmod 2\\
	b_1 &\equiv a_{11}d_0 \equiv a_{10}d_1 +c_1    , \ \ a_{11}d_{1} \equiv a_{10}d_{0} +c_0 \equiv 0 \pmod 2.
\end{align*} 
One deduces that
\begin{align*}
	&b_0^2 \equiv a_{00}d_1 (a_{01}d_0+c_0) \equiv a_{00}d_1 (a_{01}d_0+a_{10}d_0) \equiv a_{00}d_{0}d_{1}(a_{01}+a_{10})\equiv 0 \pmod 2\\ &b_1^2 \equiv a_{11}d_0 (a_{10}d_1+c_1) \equiv a_{11}d_0 (a_{10}d_1+a_{01}d_1) \equiv a_{11}d_{1}d_{0}(a_{10}+a_{01})\equiv 0 \pmod 2,
\end{align*}
 i.e.\ $b_0$ and $b_1$ are both even. However, since $\alpha \neq 0$ and $\Phi_{E_1}$ is an isomorphism, the image $\Phi_{E_1}(\alpha) = \frac{1}{2}\int_{\gamma} \in J(E_1)$  is also nontrivial. This implies that $2 \nmid \gamma = b_0v_0 + b_1v_1 \in H_1(E_{1}, \mbz)$, namely $2 \nmid gcd(b_0,b_1)$, a contradiction. Therefore, the assumption at the beginning is not true and we have  $Z_{\alpha} \notin G_{12}$.
\\
(b) Now suppose that  $E_1 \sim E_2$ without CM. In this setting, the isogeny group $\textup{Hom}(E_1,E_2)$ is generated by a nontrivial isogeny $\phi$, and the argument in (a) remains valid upon setting  $c_0 = c_1 = 0$ and $\beta_1 = 0$. This concludes the proof.
\end{proof}

Now we apply the above lemma to prove \Cref{ord_4torsion}. Let $\kappa_t^{\prime} \in \ch^2(E_1 \times E_2 \times k(E_t))$ be the restriction of the torsion class $[Z_t^{\prime}] = 2([t]-[e_1]) \times [\id] \in \ch^2(E_1 \times E_2 \times E_t)$. Recall from   \Cref{order_CycleInProduct} that $\ord([Z_t^{\prime}]) = \frac{n}{2}$ (note that $4 \mid n$). We need to show that $\ord(\kappa_t^{\prime}) = \frac{n}{2}$ as well.    
\begin{proof}[Proof of \Cref{ord_4torsion}]
	Let $4 \mid n$. Since $t \in E_1$ is a torsion point of order $n$, the torsion class $\alpha \coloneqq \frac{n}{2} \cdot ([t]-[e]) \in \ch^1(E_1)$ is of order two. Applying  \Cref{case2_lem}  to  $Z_{\alpha} \coloneqq  \alpha \times [\id] = \frac{n}{4} \cdot [Z_t^{\prime}]$ (note that $E_t \simeq E_2$), we see that $ \frac{n}{4} \cdot  [Z_t^{\prime}]$ is not from $\ch^1(E_1 \times E_2) \otimes \ch^1(E_t)$, 
	hence $\frac{n}{4} \cdot  \kappa_t^{\prime} \neq 0$ by \Cref{lem_bloch}.
	Also note that  $\frac{n}{2} \cdot  \kappa_t^{\prime} =0$ by construction. Therefore, by writing $n = 2^{s+2} \cdot k$, where $2 \nmid k$ and $s \geqslant 0$, we have $\ord(\kappa_t^{\prime}) = 2^{s+1} \cdot p$ for some positive integer $p \mid k$. 
	
	We argue by contradiction as in the proof of \Cref{order_kappa'_case1} to show that $p = k$. Assume that $p < k$. Then 
		the following pushfoward (see \Cref{ker_sum} for the notation) $$(\id_{E_1} \times \Sigma)_{\ast}((2^{s+1}\cdot p ) [Z_t^{\prime}]) = (2^{s+3}\cdot p)([t]-[e]) \times [E_2] \in \ch^1(E_1 \times E_2)$$ is a torsion class of order $\frac{k}{p}>1$, hence nontrivial. On the other hand, since $\ord(\kappa_t^{\prime}) = 2^{s+1} \cdot p$, again we have  $(2^{s+1} \cdot p )[Z_t^{\prime}] \in G_{12} \cap G^{2,1,1}_{\lhom} \subset G_{\lhom}^{1,1,2} \subset \ker (\textup{id}_{E_1} \times \Sigma)_{\ast} $ by  \Cref{chow_tripleprod} and  \Cref{ker_sum}, which is absurd. Therefore, we conclude that $p = k$ and $\ord(\kappa_t^{\prime}) = 2^{s+1} \cdot k = \frac{n}{2}$.
\end{proof}

Our main result  \Cref{ccc_Kummer_ord} follows immediately from \Cref{order_kappa'_case1} and \Cref{ord_4torsion}. 

\begin{rmk} 
	 Recall from \cite[Prop.~5.1]{Huybrechts2014} that for a polarized K3 surface $(X,L)$ and a fixed integer $n>0$, there exist at most finitely constant cycle curves $C \in |L|$ of order $n$. This finiteness result already provides evidence for our main result \Cref{ccc_Kummer_ord}. In fact, for any torsion point $t \in E_1$, the associated elliptic constant cycle curve $E_t$ represents the fibre class  of the natural elliptic fibration $p \colon \textup{Kum}(E_1 \times E_2) \ra E_1/\pm \simeq \mbp^1$, hence the finiteness result implies that there are only finitely many torsion points $t \in E_1$ such that $\ord(E_t) = 1$. In particular, if $t \in E_1$ is of prime order $p$ and  $p$ is large enough, then $\ord(E_t) = p$. 
\end{rmk}

\section{An example for the excluded case}\label{counterexample}

	In this section,  we focus on the case excluded  from the statement of our main result  \Cref{ccc_Kummer_ord}, namely when $E_1$ and $E_2$ are non-isomorphic but isogeneous elliptic curves with CM, and $4 \mid n$.  This case is notably subtle. We construct a series  of examples where, for a torsion point $t \in E_1$ of order $n = 4$, both $\ord(E_t) = 1$ and $\ord(E_t) = 2$ can occur,  depending on certain numerical conditions on $E_1$ and $E_2$.
	\begin{prop} \label{eg_excluded}
			Let $m>0,d<0$ be two integers. Consider $E_1 \coloneqq \mbc/\mbz m \oplus \mbz \sqrt{d}$ and  $E_2  \coloneqq \mbc/\mbz \oplus \mbz \sqrt{d}$. Then there exists a torsion point $t \in E_1$ of order $4$ such that 
			\begin{equation*}
				\ord(E_t) =
				\begin{cases}
					1,\quad & 2 \mid m \ \textup{and} \ 2 \nmid d \\
					2,\quad & \textup{otherwise}.
				\end{cases}
			\end{equation*}
	\end{prop}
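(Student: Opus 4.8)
The plan is to run the intermediate-Jacobian strategy of \Cref{sec_strategy} and \Cref{sec_ord_kprime} once more, now keeping track of the arithmetic of $\Hom(J(E_1),J(E_2))$ modulo $2$. By \Cref{order_kappa_same} and \Cref{order_CycleInProduct} one has $\ord(E_t)=\ord(\kappa_t^{\prime})\mid d(4)=2$, so $\ord(E_t)\in\{1,2\}$, and $\ord(E_t)=1$ precisely when $\kappa_t^{\prime}=0$; by \Cref{lem_bloch} this happens iff the $2$-torsion class $[Z_t^{\prime}]=\alpha\times[\id]\in\ch^2(E_1\times E_2\times E_t)$ lies in $G_{12}$, where $\alpha\coloneqq 2([t]-[e_1])\in\ch^1(E_1)[2]$ is the nonzero $2$-torsion class attached to $2t\in E_1[2]$. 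Since $\alpha\times[\id]\in G^{2,1,1}_{\lhom}$, \Cref{chow_tripleprod} reduces this to deciding whether $\alpha\times[\id]\in G^{1,1,2}_{\lhom}$, i.e.\ whether $\alpha\times[\id]=\sum_i[\phi_i]\times\beta_i$ for some $\phi_i\in\Hom(J(E_1),J(E_2))$ and $\beta_i\in\ch^1(E_t)_{\lhom}$; crucially, the answer will depend on which order-$4$ point $t$ (equivalently which nonzero $2$-torsion class $\alpha$) is chosen.

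First I would make $\Hom(J(E_1),J(E_2))$ explicit. From $\Lambda_1=m\mbz\oplus\sqrt{d}\,\mbz\subset\Lambda_2=\mbz\oplus\sqrt{d}\,\mbz$ one computes $\Hom(E_1,E_2)=\mbz\cdot 1\oplus\mbz\cdot\tfrac{\sqrt{d}}{g}$ with $g=\gcd(m,d)$, so a $\mbz$-basis is given by $\psi_0$ (multiplication by $1$) and $\psi_1$ (multiplication by $\sqrt{d}/g$). I would then record the integral matrices of $\psi_{0\ast},\psi_{1\ast}$ on homology in the bases $\{m,\sqrt{d}\}$ of $H_1(E_1,\mbz)$ and $\{1,\sqrt{d}\}$ of $H_1(E_2,\mbz)\cong H_1(E_t,\mbz)$, hence the classes $[\psi_0],[\psi_1]\in H_1(E_1,\mbz)\otimes H_1(E_2,\mbz)$ of the $\Hom$-components of their graphs (cf.\ \eqref{decomp_graph}) and the class $[\id]\in H_1(E_2,\mbz)\otimes H_1(E_t,\mbz)$ of the diagonal.

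Next, exactly as in the proof of \Cref{case2_lem}, I would feed the putative identity $\alpha\times[\id]=[\psi_0]\times\beta_0+[\psi_1]\times\beta_1$ into the Abel--Jacobi map $\Phi\colon\ch^2(E_1\times E_2\times E_t)_{\lhom}\ra J^3(E_1\times E_2\times E_t)$. Using that $\Hom(J(E_1),J(E_2))$ is a saturated sublattice of $H_1(E_1,\mbz)\otimes H_1(E_2,\mbz)$, together with the injectivity of $\Phi$ on codimension-two torsion classes \cite{CTSS83AJcodimtwo}, one sees that necessarily $\beta_0,\beta_1\in\ch^1(E_t)[2]$ and that the identity holds in $\ch^2$ iff its image holds in the $(1,1,1)$-Künneth summand of $J^3$, that is, in $F^2\big(H^1(E_1)\otimes H^1(E_2)\otimes H^1(E_t)\big)^{\ast}/L$ with $L=H_1(E_1,\mbz)\otimes H_1(E_2,\mbz)\otimes H_1(E_t,\mbz)$. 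Writing $\Phi_{E_1}(\alpha)=\tfrac12\int_{\gamma}$ and $\Phi_{E_t}(\beta_i)=\tfrac12\int_{\gamma_i}$, this becomes the congruence $\gamma\otimes[\id]\equiv[\psi_0]\otimes\gamma_0+[\psi_1]\otimes\gamma_1\pmod{2L}$, i.e.\ an $\mbf_2$-linear system in $L/2L$. Grouping by the $E_t$-factor, solvability amounts to the inclusion $\bar{\gamma}\otimes H_1(E_2,\mbf_2)\subseteq\langle\,\overline{[\psi_0]},\overline{[\psi_1]}\,\rangle_{\mbf_2}$ inside $H_1(E_1,\mbf_2)\otimes H_1(E_2,\mbf_2)$, i.e.\ to $\overline{[\psi_0]},\overline{[\psi_1]}$ being decomposable with common first factor $\bar\gamma$ and linearly independent; whether this occurs depends only on the parities of $m$, $m/g$ and $d/g$. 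Tabulating the cases and choosing $t$ appropriately in each regime --- say $t=m/4$ when $2\mid m$ and $2\nmid d$, and an order-$4$ point whose period reduces to a different class mod $2$ (e.g.\ $t=\sqrt{d}/4$, or any order-$4$ point when $m$ is odd) otherwise --- then yields the stated dichotomy.

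The main obstacle is this last step: the bookkeeping of the integral homology classes of the $\Hom$-components of the graphs and of $[\id]$, the ensuing $\mbf_2$-linear algebra, and --- most delicately --- isolating the correct order-$4$ point $t$ (equivalently the correct nonzero $2$-torsion class $\alpha$) so that solvability of the linear system matches the parity condition in the statement, since different classes $\alpha$ genuinely give different answers (this is the whole content of the example, and it is also what is lost when the CM hypothesis $E_1\simeq E_2$ of \Cref{case2_lem}(a) is dropped). A secondary point to be handled with care is that the passage to a congruence in the $(1,1,1)$-summand is an equivalence and not merely a necessary condition; this is where the saturation of $\Hom(J(E_1),J(E_2))$ in $H_1(E_1,\mbz)\otimes H_1(E_2,\mbz)$ and the injectivity of $\Phi$ on codimension-two torsion classes are both used.
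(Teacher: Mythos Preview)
Your plan is essentially the paper's own argument: reduce $\ord(E_t)=1$ to $[Z_t']\in G^{1,1,2}_{\lhom}$ via \Cref{chow_tripleprod}, pass to $J^3(E_1\times E_2\times E_t)$ using injectivity of the Abel--Jacobi map on codimension-two torsion, and solve the resulting mod-$2$ congruence in $H_1(E_1,\mbz)\otimes H_1(E_2,\mbz)\otimes H_1(E_t,\mbz)$. The paper is more direct---it fixes a single $t$ (the one with $\gamma=v_0$) upfront and writes the $\mbf_2$-linear system out explicitly with the generators $\phi_1,\phi_{\sqrt d}$ of $\Hom(E_1,E_2)$---whereas your refinement with the saturated basis $1,\sqrt{d}/g$ (for $g=\gcd(m,|d|)$) and an adaptive choice of $t$ is more careful but leaves the final case tabulation as a sketch.
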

	 \begin{proof}
	 Denote by $\phi_{\lambda}$ the isogeny given by the multiplication by $\lambda \in \mbc$ (when $\lambda$ preserves the lattice).
	 One can check that $E_1,E_2$ are non-isomorphic isogeneous elliptic curves with CM, and the isogeny group $\Hom(E_1,E_2) = \mbz \cdot \phi_{1} \oplus \mbz \phi_{\sqrt{d}}$.  Let $v_0,v_1 \in H_1(E_1,\mbz)$ and $w_0,w_1 \in H_1(E_2,\mbz)$ denote  the generators $m,\sqrt{d}$ and $1,\sqrt{d}$ of the two lattices, repectively. Then the matrices of the two generators $\phi_1,\phi_{\sqrt{d}}$ under the above basis are
	\begin{gather}
		\phi_{1}\begin{bmatrix} v_0  \\ v_1 \end{bmatrix}
		=
		\begin{bmatrix}
			m &
			0 \\
			0 &
			1 
		\end{bmatrix} \cdot \begin{bmatrix}
			w_0  \\
			w_1 
		\end{bmatrix}, \ \ \phi_{\sqrt{d}}\begin{bmatrix} v_0  \\ v_1 \end{bmatrix}
		= \begin{bmatrix}
			0 &
			m \\
			d &
			0 
		\end{bmatrix} \cdot \begin{bmatrix}
			w_0  \\
			w_1 
		\end{bmatrix},
	\end{gather}
	and the induced one-cycles $[\phi_1], [\phi_{\sqrt{d}}] \in \ch^1(E_1 \times E_2)$ can be represented as 
	\begin{equation} \label{rep_isogeny}
		[\phi_{1}] = m v_1 \otimes w_0 - v_0 \otimes w_1, \ \ [\phi_{\sqrt{d}}] = - m v_1 \otimes w_1 + dv_0 \otimes w_0
	\end{equation}
	as topological two-cycles in $H_2(E_1 \times E_2, \mbz)$. 
	
	Now  take a torsion point $t \in E_1$ of order $n = 4$ such that $\Phi_{E_1}(2([t]-[e_1])) = \frac{1}{2}\int_{v_0} $ in  $J(E)$. Note that $t$ is not unique and indeed there are four choices.	By  \Cref{order_kappa_same}, we have $\ord(E_t) = \ord(\kappa_t^{\prime}) \mid d(n) = 2$. It remains to determine when $\ord(\kappa_t^{\prime}) = 1$. We repeat the  Abel--Jacobi map argument from the proof of \Cref{case2_lem}. 
	Indeed, $\ord(\kappa_t^{\prime}) = 1$ if and only if its compactification $[Z_t^{\prime}] = 2([t]-[e_1]) \otimes [\id]  \in \ch^2(E_1 \times E_2 \times E_t)$ is represented by a class from $\Hom(J(E_1) \times J(E_2)) \otimes \ch^1(E_t)_{\lhom}$,
	 i.e.\ there exist $\beta_1, \beta_2 \in \ch^1(E_{t})_{\lhom}$ such that
	 \begin{equation} \label{eq_withCM}
		[Z_t^\prime] = 2([t]-[e_1]) \otimes [\id] = [\phi_1] \otimes \beta_1 + [\phi_{\sqrt{d}}] \otimes \beta_2 \in \ch^2(E_1 \times E_2 \times E_t).
	 \end{equation}
 	Since $[Z_t^{\prime}]$ is a nonzero two-torsion class (see \Cref{order_CycleInProduct}),  by the injectivity of the Abel--Jacobi map $\Phi$ on codimensional two torsion classes (\cite{CTSS83AJcodimtwo}), it is equivalent to require that there exist $\beta_1,\beta_2 \in \ch^1(E_t)$ such that
 		$$\Phi\big(2([t]-[e_1]) \otimes [\id]\big) = \Phi\big([\phi_1] \otimes \beta_1 + [\phi_{\sqrt{d}}] \otimes \beta_2 \big)\in J^3(E_1 \times E_2 \times E_t).
 	$$ Note that any $\beta_1,\beta_2$ satisfying the above equation  have to be two-torsion and are not both trivial. Since the Abel--Jacobi map $\Phi_{E_t}$ identifies the two-torsion subgroup $\ch^1(E_t)[2]$ with  $J(E)[2] = \{\frac{1}{2}\int_{\gamma} \mid \gamma \in H_1(E_t,\mbz)\}$, we see that  $\ord(E_t) = 1$ if and only if there exist $\gamma_1,\gamma_2$ in $H_1(E_t,\mbz)$ such that
    \begin{equation}\label{eq_countereg_int}
    	\frac{1}{2} \int_{v_0 \otimes [\id]} = \frac{1}{2} \int_{[\phi_1] \times \gamma_1} + \frac{1}{2} \int_{[\phi_{\sqrt{d}}] \times \gamma_2} \in J^3(E_1 \times E_2 \times E_t).
    \end{equation} 
Using the chosen basis of $H_1(E_1,\mbz)$,  $H_1(E_2,\mbz) \simeq H_1(E_t,\mbz)$ and plugging in \eqref{rep_isogeny}, we reformulate \eqref{eq_countereg_int} as: There exists $c_0,c_1,d_0,d_1 \in \mbz$ such that
 	\begin{align*}
 		v_0 \otimes (-w_0 \otimes w_1 + w_1 \otimes w_0) \equiv &(m v_1 \otimes w_0 - v_0 \otimes w_1) \otimes (c_0w_0+c_1w_1) \\
 		& + \big(-m v_1 \otimes w_1 + dv_0 \otimes w_0\big) \otimes (d_0w_0 + d_1w_1) \\
 		&\qquad \qquad \qquad \qquad \qquad \textup{mod} \ \ 2H_3(E_1 \times E_2 \times E_t,\mbz).
 	\end{align*} 
 
 	To sum up, under our choice of the torsion point $t \in E_1$ of order $n = 4$, we have $\ord(E_t) \mid 2$, and $\ord(E_t) = 1$ if and only if the following system of congruence equations 
 	\begin{equation*}
 		1 \equiv dd_1 \equiv c_0, \ \   mc_0 \equiv c_1 \equiv 0, \ \ 
 		0 \equiv mc_1 \equiv md_0    , \ \ md_1 \equiv dd_0 \equiv 0 \pmod 2
 	\end{equation*} 
   has a solution $(c_0,c_1,d_0,d_1)$, namely if and only if $2 \mid m$ and $2 \nmid d$. In this case the solution is $c_1 \equiv d_0  \equiv  0$ and $c_0 \equiv d_1 \equiv 1 \pmod 2$. 
 	\end{proof}

\medskip

\printbibliography
\end{document}